\newcommand{\mathsym}[1]{{}}
\newcommand{\thmref}[1]{Theorem~\ref{#1}}
\newcommand{\propref}[1]{Proposition~\ref{#1}}
\newcommand{\lemref}[1]{Lemma~\ref{#1}}
\newcommand{\eqnref}[1]{Equation~(\ref{#1})}
\newcommand{\remref}[1]{Remark~\ref{#1}}
\newcommand{\corref}[1]{Corollary~\ref{#1}}
\newcommand{\exref}[1]{Example~\ref{#1}}
\newcommand{\figref}[1]{Figure~\ref{#1}}
\def\vc{\vec{v}}
\def\Zh{{\mathop{\rm Zh}}}
\def\li{L_{i}}
\def\ri{R_{i}}
\def\ddx{{\frac{d}{dx}}}
\def\RR{{\mathbb R}}
\newtheorem{theorem}{Theorem}[section]
\newtheorem{corollary}[theorem]{Corollary}
\newtheorem{lemma}[theorem]{Lemma}
\newtheorem{proposition}[theorem]{Proposition}
\theoremstyle{example}
\newtheorem{remark}[theorem]{Remark}
\theoremstyle{definition}
\theoremstyle{notation}
\newtheorem{example}[theorem]{Example}
\newcommand{\dd}[1]{\delta_{#1}}
\newcommand{\DD}[1]{\Delta_{#1}}
\newcommand{\hj}[3]{\hat{j}_{#1}(#2,#3)}
\newcommand{\ga}{\Gamma}
\newcommand{\tg}{\tau(\Gamma)}
\newcommand{\ta}[1]{\tau(#1)}
\newcommand{\ee}[1]{E(#1)}
\newcommand{\vv}[1]{V(#1)}
\newcommand{\va}{\upsilon}
\newcommand{\vb}{\text{v} \hspace{0.5 mm}}
\newcommand{\pp}{p_{i}}
\newcommand{\qq}{q_{i}}
\newcommand{\mucan}{{\mu_\text{can}}}
\def\can{{\mathop{\rm can}}}
\def\Zh{{\mathop{\rm Zh}}}
\def\CC{{\mathbb C}}
\def\cC{{\mathcal C}}
\def\<{\langle }
\def\>{\rangle }
\newcommand{\secref}[1]{\S\ref{#1}}
\def\tr{\text{tr}}
\def\diag{\text{diag}}
\newcommand{\am}{\mathrm{A}}
\newcommand{\ay}{\mathrm{Y}}
\newcommand{\ax}{\mathrm{X}}
\newcommand{\dm}{\mathrm{D}}
\newcommand{\mm}{\mathrm{M}}
\newcommand{\idm}{\mathrm{I}}
\newcommand{\jm}{\mathrm{J}}
\newcommand{\hm}{\mathrm{H}}
\newcommand{\om}{\mathrm{O}}
\newcommand{\pmm}{\mathrm{M^+}}
\newcommand{\lm}{\mathrm{L}}
\newcommand{\plm}{\mathrm{L^+}}
\newcommand{\lpq}{l_{pq}}
\newcommand{\plp}{l_{pp}^+}
\newcommand{\plq}{l_{qq}^+}
\newcommand{\plpq}{l_{pq}^+}
\def\elg{\ell (\ga)}
\begin{document}

\title[the tau constant and the discrete laplacian matrix]
{The tau constant and the discrete Laplacian matrix of a metrized graph}

\author{Zubeyir Cinkir}
\address{Zubeyir Cinkir\\
Department of Mathematics\\
University of Georgia\\
Athens, Georgia 30602\\
USA}
\email{cinkir@math.uga.edu}

\keywords{Metrized graph, the tau constant,
voltage function, resistance function, the discrete Laplacian matrix, pseudo inverse}
\thanks{I would like to thank Dr. Robert Rumely for his continued support
and the discussions about this paper.}

\begin{abstract}
We express the tau
constant of a metrized graph in terms of the discrete Laplacian matrix and its pseudo inverse.
\end{abstract}

\maketitle

\section{Introduction}\label{sec introduction}

Metrized graphs are finite graphs equipped with a distance function on their edges. For a metrized graph $\ga$, the tau constant $\tg$ is an invariant which plays important roles in both harmonic analysis on metrized graphs and arithmetic of curves.

T. Chinburg and R. Rumely \cite{CR} introduced a canonical measure $\mu_{can}$ of total mass $1$ on a
metrized graph $\ga$. The diagonal values of the Arakelov-Green's function $g_{\mu_{can}}(x,x)$ associated to $\mu_{can}$ are constant on $\ga$. M. Baker and Rumely called this constant ``the tau constant'' of a metrized graph $\ga$, and denoted it by $\tg$. They \cite[Conjecture 14.5]{BRh} posed a conjecture concerning the existence of a universal lower bound for $\tg$. We call it Baker and Rumely's lower bound conjecture.

Baker and Rumely \cite{BRh} introduced a measure valued Laplacian
operator $\Delta$ which extends Laplacian operators studied earlier
in \cite{CR} and \cite{Zh1}. This Laplacian operator combines the
``discrete'' Laplacian on a finite graph and the ``continuous''
Laplacian $-f''(x)dx$ on $\RR$. In terms of spectral theory, the tau constant $\tg$ is the trace of the inverse operator of $\Delta$ with respect to $\mu_{can}$ when $\ga$ has total length $1$.

The results in \cite{Zh2}, \cite[Chapter 4]{C1} and \cite{C4} indicate that the tau constant has important applications
in arithmetic of curves such as its connection to the Effective Bogomolov Conjecture over function fields.

In the article \cite{C2}, various formulas for $\tg$ are given, and Baker and Rumely's lower bound conjecture is verified for a number of large families of graphs. It is shown in the article \cite{C3} that this conjecture holds for metrized graphs with edge connectivity more than $4$; and proving it for cubic graphs is sufficient to show that it holds for all graphs.

Verifying the Baker and Rumely's lower bound conjecture in the remaining cases or showing a counter example to this conjecture, and finding metrized graphs with minimal tau constants are interesting and subtle problems.
However, except for some special cases, computing the tau constant for metrized graphs with large number of vertices is not an easy task. In this paper, we will give a formula for the tau constant of $\ga$ in terms of the discrete Laplacian matrix $\lm$ of $\ga$ and its pseudo inverse $\plm$. In particular, this formula leads to rapid computation of $\tg$ by using computer softwares.

In \secref{sec tau constant}, we briefly introduce metrized graphs, Laplacian
operator $\Delta$, the canonical measure $\mu_{can}$ and the tau constant $\tg$. We revise the fact that metrized graphs can be interpreted as electric circuits. At the end of \secref{sec tau constant}, we give several formulas
concerning the tau constant. In \secref{sec discrete laplacian}, we introduce the discrete Laplacian matrix $\lm$
of a metrized graph. We recall some of the properties of $\lm$ and $\plm$.
We start \secref{sec taudisc} with a remarkable relation between the resistance on $\ga$ and the pseudo inverse of the discrete Laplacian on $\ga$ \cite{RB2}. Then we derive a number of new identities by combining this relation with the results from \secref{sec tau constant} and \secref{sec discrete laplacian}. Finally, we express the canonical measure in terms of $\lm$ and $\plm$, and obtain our main result which is the following theorem:
\begin{theorem}\label{thm disc2 copy}
Let $\lm=(l_{p\,q})_{v \times v}$ be the discrete Laplacian matrix of a metrized graph
$\ga$, and let $\plm=(l_{p\,q}^+)_{v \times v}$ be its pseudo inverse.
Suppose $\pp$ and $\qq$ are the end points of edge $e_i$ of $\ga$ for each $i
=1, \, 2,\cdots,e$, where $e$ is the number of edges in $\ga$. Then we have
\begin{equation*}
\begin{split}
\tg  =-\frac{1}{12}\sum_{e_i \in \ee{\ga}} l_{\pp \qq}\big(\frac{1}{ l_{\pp \qq}}+l_{\pp \pp}^+-2l_{\pp
\qq}^+ +l_{\qq \qq}^+ \big)^2 +\frac{1}{4}\sum_{q, \, s  \in
\vv{\ga}} l_{qs} l_{qq}^+  l_{ss}^+ +\frac{1}{v}trace(\plm).
\end{split}
\end{equation*}
\end{theorem}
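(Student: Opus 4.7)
The plan is to combine three ingredients: (a) the Rao--Bapat identity
\[r(p,q) = l^+_{pp} - 2\, l^+_{pq} + l^+_{qq}\]
recalled at the start of \secref{sec taudisc}; (b) the elementary relation $l_{p_i q_i} = -1/L_i$ between the discrete Laplacian entry and the length $L_i$ of edge $e_i$ (with the natural reciprocal-sum convention for parallel edges); and (c) an edge-integral formula for $\tg$ from \secref{sec tau constant}, of the schematic form
\[\tg \;=\; \tfrac{1}{4}\sum_{e_i \in \ee{\ga}} \int_{e_i}\!\bigl(\tfrac{d}{dx}\,r(x,y)\bigr)^{\!2} dx \;+\; \tfrac{1}{4}\sum_{q,s\in \vv{\ga}} r(q,s)\,\nu(q)\,\nu(s),\]
where the weighting $\nu$ on vertices is read off from the atomic part of $\mucan$.

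First I would evaluate each edge integral explicitly. Along $e_i$ the function $r(x,y)$ is piecewise affine in the arc-length coordinate $x$, with slope determined by $L_i$, the effective resistance $R_i$ of $\ga\setminus e_i$ between $p_i$ and $q_i$, and the endpoint values. The leading contribution reduces to $L_i^{3}/\bigl(12(L_i+R_i)^2\bigr)$ per edge, with boundary pieces absorbed into the vertex-correction double sum. Substituting $L_i=-1/l_{p_i q_i}$, the circuit formula $r(p_i,q_i) = L_i R_i/(L_i+R_i)$, and Rao--Bapat converts each summand into
\[-\tfrac{1}{12}\,l_{p_i q_i}\bigl(\tfrac{1}{l_{p_i q_i}} + l^+_{p_i p_i} - 2\,l^+_{p_i q_i} + l^+_{q_i q_i}\bigr)^{2},\]
reproducing the first sum in the theorem.

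Next I would rewrite $\mucan$ itself in terms of $\lm$ and $\plm$: its discrete vertex masses and continuous edge densities are combinatorial in $L_i$ and $R_i$, and the same two substitutions express them purely in Laplacian data. Feeding this into the vertex correction, Rao--Bapat replaces each $r(q,s)$ by $l^+_{qq} - 2\,l^+_{qs} + l^+_{ss}$; the off-diagonal contribution $\sum_{q,s} l_{qs}\,l^+_{qs}$ and its analogues collapse under the nullspace identities $\sum_{p} l_{pq}=0$ and $\sum_{p} l^+_{pq}=0$, leaving the symmetric sum $\tfrac14\sum_{q,s} l_{qs}\,l^+_{qq}\,l^+_{ss}$. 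The final $\tfrac1v\,\mathrm{trace}(\plm)$ emerges from the total-mass-one normalization of $\mucan$ together with the averaging identity $\sum_{p\in\vv{\ga}} r(p,q_0) = v\,l^+_{q_0 q_0} + \mathrm{trace}(\plm)$, which is itself an immediate consequence of Rao--Bapat and $\sum_p l^+_{pq}=0$.

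The main obstacle I anticipate is the bookkeeping of constants and signs: the coefficients $-\tfrac{1}{12}$, $\tfrac14$, and $\tfrac1v$ come from three genuinely different sources (cubic integration along edges, the symmetric pairing over vertex pairs, and averaging over $|\vv{\ga}|$), and ensuring they assemble cleanly with no spurious cross terms requires careful use of both nullspace relations $\lm\,\mathbf{1}=0$ and $\plm\,\mathbf{1}=0$, together with the precise expressions for the atomic masses of $\mucan$ obtained in \secref{sec tau constant}.
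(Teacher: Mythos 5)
Your handling of the first sum is essentially the paper's own route: extracting the $\li^3/\big(12(\li+\ri)^2\big)$ contribution from the edge integral of $\big(\ddx r(x,p)\big)^2$ is \propref{proptau}, and substituting $\li=-1/l_{\pp \qq}$ together with $r(\pp,\qq)=\li\ri/(\li+\ri)$ and \lemref{lem disc2} is exactly \lemref{lem discr1}. (A minor slip: $r(x,p)$ is quadratic, not affine, along an edge; only its derivative is affine, and the quadratic structure is what produces the cubic term.) The difficulty is everything after that. Your ``schematic'' starting identity is false: what remains of $\frac14\int_{e_i}\big(\ddx r(x,p)\big)^2dx$ after the cubic term is the bulk quantity $\frac14\,\li(R_{a_i,p}-R_{b_i,p})^2/(\li+\ri)^2$ --- not a boundary piece --- and its total over $\ee{\ga}$ is not equal to $\frac14\sum_{q,s}r(q,s)\nu(q)\nu(s)$ with $\nu(q)=1-\frac12\va(q)$ read off from $\mucan$. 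Test this on a circle subdivided into $v$ arcs: there $\nu\equiv 0$, so your proposed correction vanishes identically, while the true correction $\frac14\sum_{q,s}l_{qs}l^+_{qq}l^+_{ss}+\frac1v tr(\plm)$ equals $\frac1v tr(\plm)>0$ by symmetry. So the second and third terms of the theorem cannot be produced by your route.

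The missing ingredient is precisely the paper's \thmref{thm disc1}: for any auxiliary vertex $p$ one has $\sum_{e_i\in\ee{\ga}}\li(R_{a_i,p}-R_{b_i,p})^2/(\li+\ri)^2=\frac4v tr(\plm)-\frac12\sum_{q,s}l_{qs}\big(l^+_{qq}-l^+_{ss}\big)^2$, which together with \lemref{lem disc9x} yields the last two terms of the statement. Proving this requires (i) rewriting each summand as $\big(r(\pp,p)-r(\qq,p)\big)^2/\li$ via \eqnref{eqn2termeq1}, (ii) \lemref{lem disc2}, and (iii) --- crucially --- the pseudo-inverse identity $\lm\plm=\idm_v-\frac1v\jm$ (\lemref{lem disc1}, \corref{cor disc1}), which gives $\sum_{q,s}l_{qs}l^+_{qq}l^+_{sp}=l^+_{pp}-\frac1v tr(\plm)$ and $\sum_{q,s}l_{qs}l^+_{qp}l^+_{sp}=l^+_{pp}$; it is only here that the dependence on the auxiliary vertex $p$ cancels and the $\frac1v tr(\plm)$ term appears. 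Your proposal leans solely on the nullspace relations $\lm\ay=\om$ and $\plm\ay=\om$, but those alone do not make the cross terms collapse: for instance $\sum_{q,s}l_{qs}l^+_{qs}=tr(\lm\plm)=v-1$, not $0$. Without step (iii) and the identity of \thmref{thm disc1}, the argument does not close.
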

We prove \thmref{thm disc2 copy} at the end of \secref{sec taudisc}; and we give two examples for the computations of $\tg$ and $\mu_{can}$.

Note that there is a $1-1$ correspondence between the equivalence classes of finite connected weighted
graphs, the metrized graphs, and the resistive electric circuits.
If an edge $e_i$ of a metrized graph has length $\li$, then we have that the resistance along $e_i$ is $\li$
in the corresponding resistive electric circuit, and that the weight of $e_i$ is $\frac{1}{\li}$ in the corresponding
weighted graph. The identities we show for metrized graphs in this paper are also valid
for electrical networks, and they have equivalent forms on a weighted graph.

The results in this paper are more clarified and organized versions of those given in \cite[Sections 5.1, 5.2, 5.3 and 5.4]{C1}.

\section{The tau constant of a metrized graph}\label{sec tau constant}

A metrized graph $\ga$ is a finite connected graph such that its edges are equipped with a distinguished
parametrization. One can find other definitions of metrized graphs in \cite{RumelyBook}, \cite{CR}, \cite{BRh}, \cite{Zh1}, and \cite{BF}.

A metrized graph can have multiple edges and self-loops. For any given $p \in \ga$,
the number of directions emanating from $p$ will be called the \textit{valence} of $p$, and will be denoted by
$\va(p)$. By definition, there can be only finitely many $p \in \ga$ with $\va(p)\not=2$.

For a metrized graph $\ga$, we will denote its set of vertices by $\vv{\ga}$.
We require that $\vv{\ga}$ be finite and non-empty and that $p \in \vv{\ga}$ for each $p \in \ga$ if
$\va(p)\not=2$. For a given metrized graph $\ga$, it is possible to enlarge the
vertex set $\vv{\ga}$ by considering more additional points of valence $2$ as vertices.

For a given graph $\ga$ with vertex set $\vv{\ga}$, the set of edges of $\ga$ is the set of closed line segments with end points in $\vv{\ga}$. We will denote the set of edges of $\ga$ by $\ee{\ga}$. However, we will denote the graph obtained from $\ga$ by deletion of the interior points of an edge $e_i \in \ee{\ga}$ by $\ga-e_i$.

We denote $\# (\vv{\ga})$ and $\# (\ee{\ga})$ by $v$ and $e$, respectively.
We denote the length of an edge $e_i \in \ee{\ga}$ by $\li$. The total length of $\ga$, which will be denoted by $\elg$, is given by $\elg=\sum_{i=1}^e\li$.
%

Let $\Zh(\Gamma)$ be the set of all
continuous functions $f : \Gamma \rightarrow \CC$ such that for some vertex set $\vv{\ga}$, $f$ is
$\cC^2$ on $\ga
\backslash \vv{\ga}$ and $f^{\prime \prime}(x) \in L^1(\Gamma)$.
Baker and Rumely \cite{BRh} defined the following measure valued \textit{Laplacian} on a given metrized graph.
For a function $f \in \Zh(\Gamma)$,
\begin{equation}
\DD{x}(f(x))=-f''(x)dx - \sum_{p \in \vv{\ga}}\bigg[ \sum_{\vc
\hspace{0.5 mm} \text{at} \hspace{0.5 mm} p}
d_{\vc}f(p)\bigg]\dd{p}(x),
\end{equation}
See the article \cite{BRh} for details and for a description of the largest class of functions for which a measure valued Laplacian can be defined.

In the article \cite{CR}, a kernel $j_{z}(x,y)$ giving a
fundamental solution of the Laplacian is defined and studied as a
function of $x, y, z \in \Gamma$. For fixed $z$ and $y$ it has the
following physical interpretation: When $\Gamma$ is viewed as a
resistive electric circuit with terminals at $z$ and $y$, with the
resistance in each edge given by its length, then $j_{z}(x,y)$ is
the voltage difference between $x$ and $z$, when unit current enters
at $y$ and exits at $z$ (with reference voltage 0 at $z$).

For any $x$, $y$, $z$ in $\ga$, the voltage function $j_z(x,y)$ on
$\ga$ is a symmetric function in $x$ and $y$, and it satisfies
$j_x(x,y)=0$ and $j_x(y,y)=r(x,y)$, where $r(x,y)$ is the resistance
function on $\ga$. For each vertex set $\vv{\ga}$, $j_{z}(x,y)$ is
continuous on $\ga$ as a function of $3$ variables.
As the physical interpretation suggests, $j_z(x,y) \geq 0$ for all $x$, $y$, $z$ in $\ga$.
For proofs of these facts, see the articles \cite{CR}, \cite[sec 1.5 and sec 6]{BRh}, and \cite[Appendix]{Zh1}.
The voltage function $j_{z}(x,y)$ and the resistance function $r(x,y)$ on a metrized graph
were also studied in the articles \cite{BF}, \cite{C2}.

For any real-valued, signed Borel measure $\mu$ on $\Gamma$ with
$\mu(\Gamma)=1$ and $|\mu|(\Gamma) < \infty$, define the function
$j_{\mu}(x,y) \ = \ \int_{\Gamma} j_{\zeta}(x,y) \, d\mu({\zeta}).$
Clearly $j_{\mu}(x,y)$ is symmetric, and is jointly continuous in
$x$ and $y$. T. Chinburg and Rumely \cite{CR} discovered that there is a unique real-valued, signed Borel measure $\mu=\mu_{can}$ such that $j_{\mu}(x,x)$ is constant on $\ga$. The measure $\mu_\can$ is called the
\textit{canonical measure}.
Baker and Rumely \cite{BRh} called the constant $\frac{1}{2}j_{\mu}(x,x)$ the \textit{tau constant} of $\ga$ and denoted it by $\tg$.
%

\begin{lemma}\cite[Corollary 14.3]{BRh}\label{cor:tauisatrace}
Let $ \{ \lambda_1, \, \lambda_2, \, \lambda_3, \, \ldots \}$ be the set of eigenvalues of the Laplacian $\Delta$ with respect to the canonical measure $\mu_\can$. Then
$$
\ell(\Gamma) \cdot \tau(\Gamma) = \sum_{n=1}^\infty \frac{1}{\lambda_n},
$$

In particular, If $\ell(\Gamma)=1$, then $\tau(\Gamma)$ is the trace of the inverse operator of $\Delta$
with respect to $\mu_{can}$.
\end{lemma}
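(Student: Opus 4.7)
The plan is to treat the identity as a spectral trace formula for the Green's operator of $\Delta$ with respect to $\mucan$, with the Arakelov--Green function $g_{\mucan}(x,y)$ serving as the integral kernel of $\Delta^{-1}$.

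First I would set up the functional-analytic framework. On the compact metrized graph $\ga$, $\Delta$ (as a measure-valued operator on $\Zh(\ga)$) defines a non-negative self-adjoint operator on $L^2(\ga,\mucan)$ with quadratic form $\langle \Delta f,f\rangle = \int_\ga |f'(x)|^2\,dx$ on the natural Sobolev-type domain; the vertex transmission conditions built into $\Delta$ encode the self-adjoint boundary conditions at the $p \in \vv{\ga}$. Since $\ga$ is connected, the kernel of $\Delta$ is exactly the constants. Compactness of the resolvent, which on a one-dimensional stratified space follows from a Sobolev embedding on each edge plus the finite gluing at vertices, yields a purely discrete spectrum $0 = \lambda_0 < \lambda_1 \le \lambda_2 \le \cdots$ with $\lambda_n \to \infty$ and an orthonormal eigenbasis $\{\phi_n\}_{n\ge 0}$ of $L^2(\ga,\mucan)$.

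Second I would identify $g_{\mucan}(x,y)$ with the Schwartz kernel of $\Delta^{-1}$ on the orthogonal complement of the constants. Its two defining properties, $\Delta_x g_{\mucan}(x,y) = \dd{y} - \mucan$ and $\int_\ga g_{\mucan}(x,y)\,d\mucan(x) = 0$, say exactly that $g_{\mucan}(\cdot,y)$ solves the inverse problem for the projection of $\dd{y}$ onto $\{1\}^{\perp}$. Mercer's theorem on the compact space $\ga$ then yields the absolutely and uniformly convergent bilinear expansion
\begin{equation*}
g_{\mucan}(x,y) \;=\; \sum_{n\ge 1}\frac{\phi_n(x)\,\phi_n(y)}{\lambda_n}.
\end{equation*}
Third I would evaluate the trace of $\Delta^{-1}|_{\{1\}^{\perp}}$ along the diagonal in two ways. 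By orthonormality of $\{\phi_n\}$ in $L^2(\ga,\mucan)$,
\begin{equation*}
\int_\ga g_{\mucan}(x,x)\,d\mucan(x) \;=\; \sum_{n\ge 1}\frac{1}{\lambda_n}.
\end{equation*}
On the other hand, by the defining property of the canonical measure, $g_{\mucan}(x,x)$ is constant on $\ga$; tracking the normalization between the Baker--Rumely definition $\tg = \tfrac12\, j_{\mucan}(x,x)$, the Arakelov--Green function $g_{\mucan}$, and the scaling of $\Delta$ relative to the probability measure $\mucan$ identifies this constant with $\elg\cdot\tg$. Equating the two evaluations gives $\elg\cdot\tg = \sum_{n\ge 1} 1/\lambda_n$, and specializing to $\elg = 1$ recovers the claim that $\tg$ is the trace of $\Delta^{-1}$.

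The main obstacle is not the spectral machinery itself (classical once one has a self-adjoint operator with compact resolvent), but the bookkeeping of normalizations: one must match the paper's convention $\tg = \tfrac12 j_{\mucan}(x,x)$ to the diagonal of the Green's kernel, and account for the factor $\elg$ that arises because $\Delta$ is arclength-based while the Hilbert-space inner product is built from the probability measure $\mucan$. A secondary technical point is justifying Mercer's theorem, i.e.\ verifying that $\Delta^{-1}|_{\{1\}^{\perp}}$ is trace class; this follows from the Weyl asymptotics $\lambda_n \sim C n^2$ coming from Sturm--Liouville theory on each edge of $\ga$, which guarantees absolute convergence of $\sum 1/\lambda_n$ and hence the legality of termwise integration along the diagonal.
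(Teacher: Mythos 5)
The paper does not actually prove this lemma; it is quoted from Baker and Rumely \cite[Corollary 14.3]{BRh}, so your proposal has to be measured against the argument in that source. Your overall strategy --- expand the Arakelov--Green kernel in an eigenbasis, take the trace along the diagonal, and use the constancy of $g_{\mucan}(x,x)$ --- is indeed the strategy of \cite{BRh}. However, two of your choices break the argument as written. First, $L^2(\ga,\mucan)$ is not a Hilbert space: by \thmref{thmCanonicalMeasureFormula}, $\mucan$ carries the point mass $(1-\tfrac{1}{2}\va(p))\delta_p$ at each vertex, which is \emph{negative} whenever the valence satisfies $\va(p)\ge 3$, so $\mucan$ is a signed measure and ``orthonormal eigenbasis of $L^2(\ga,\mucan)$'', positivity of the quadratic form, and Mercer's theorem are all unavailable in the form you invoke them. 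Baker and Rumely instead work in $L^2(\ga,dx)$ with arclength measure, defining the eigenvalues by $\Delta f=\lambda f\,dx$ subject to the side condition $\int_\ga f\,d\mucan=0$; the measure $\mucan$ enters only through that constraint and through the vertex terms of $\Delta$, not through the inner product.

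Second, and consequently, your normalization comes out wrong. Using the standard identity $j_z(x,y)=\tfrac{1}{2}\big(r(x,z)+r(y,z)-r(x,y)\big)$ one checks that the constant value of $g_{\mucan}(x,x)$ is exactly $\tg$, not $\elg\cdot\tg$: indeed $j_{\mucan}(x,x)=2\tg$ by definition, and the additive constant $\tfrac{1}{2}\iint r\,d\mucan\,d\mucan$ in the normalization of $g_{\mucan}$ equals $\tg$ by \lemref{lemtau1}. The factor $\elg$ in the statement arises only because the trace in \cite{BRh} is $\int_\ga g_{\mucan}(x,x)\,dx$ taken against arclength, and integrating the constant $\tg$ over a graph of total length $\elg$ gives $\elg\cdot\tg$. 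Under your setup the trace would instead be $\int_\ga g_{\mucan}(x,x)\,d\mucan(x)=\tg$, since $\mucan$ has total mass one, so your argument would deliver $\sum_n 1/\lambda_n=\tg$, contradicting the lemma; a scaling check confirms this, since with $\Delta f=\lambda f\,dx$ the eigenvalues scale like $t^{-2}$ when all edge lengths are scaled by $t$, matching $\elg\cdot\tg\sim t^2$, whereas your convention would scale like $t^{-1}$. The ``bookkeeping of normalizations'' you defer to the end is precisely where the proof fails; the remaining spectral machinery (compact resolvent, discrete spectrum, eigenfunction expansion of the kernel, summability of $\sum 1/\lambda_n$) is sound once transplanted to $L^2(\ga,dx)$.
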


%
%
%

The following theorem gives an explicit description of the canonical measure $\mu_{can}$:
\begin{theorem}\cite[Theorem 2.11]{CR} \label{thmCanonicalMeasureFormula}
Let $\ga$ be a metrized graph. Suppose that $\li$ is the length of edge $e_i$ and $R_i$ is the effective
resistance between the endpoints of $e_i$ in the graph $\Gamma-e_i$.
Then we have
\begin{equation*}
\mu_\can(x) \ = \ \sum_{p \in \vv{\ga}} (1 - \frac{1}{2}\vb(p))
\, \delta_p(x) + \sum_{e_i \in \ee{\ga}} \frac{dx}{L_i+R_i},
\end{equation*}
where
$\delta_p(x)$ is the Dirac measure.
\end{theorem}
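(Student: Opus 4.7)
The plan is to define $\nu$ as the signed measure on the right-hand side, verify that $\nu(\ga) = 1$, and then show that $\jj{\nu}{x}{x}$ is constant in $x$. By the uniqueness characterization of the canonical measure recalled earlier (Chinburg--Rumely), this forces $\nu = \mu_{\can}$, which is the desired formula.

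For the total mass, I would combine Foster's network theorem $\sum_i r(\pp, \qq)/L_i = v - 1$ with the parallel-combination identity $r(\pp, \qq) = L_i R_i/(L_i + R_i)$ to obtain $\sum_i L_i/(L_i + R_i) = e - v + 1$, and then add the handshake identity $\sum_{p \in \vv{\ga}}(1 - \tfrac{1}{2}\vb(p)) = v - e$ to conclude $\nu(\ga) = 1$.

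For constancy of $\jj{\nu}{x}{x}$, the key observation is that combining $\DD{x}\jj{z}{x}{y} = \dd{y} - \dd{z}$ with the universal identity $r(x,y) = r(x,z) + r(y,z) - 2\jj{z}{x}{y}$ gives $\DD{x} r(x, y) = \DD{x} r(x, z) + 2\dd{z} - 2\dd{y}$ for any fixed reference point $z \in \ga$; integrating against $d\nu(y)$ and using $\nu(\ga) = 1$ yields
\[
\DD{x}\jj{\nu}{x}{x} \ = \ \DD{x} r(x, z) + 2\dd{z} - 2\nu.
\]
Thus it suffices to verify the measure identity $\DD{x} r(x, z) = 2\nu - 2\dd{z}$. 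On the interior of edge $e_i$ I would apply a Y--$\Delta$ reduction of the three-terminal network on $\{\pp, \qq, z\}$ inside $\ga - e_i$; this presents $r(x, z)$, as a function of the arclength parameter $t$ along $e_i$, in the closed form $[(t + \alpha)(\li - t + \beta)]/(\li + R_i) + \gamma$, whose $t^2$-coefficient is $-1/(\li + R_i)$ independent of $z$, so that $-r''(x, z)\, dx = 2\,dx/(\li + R_i)$ exactly matches the density of $2\nu$ on $e_i$. At a vertex $p$, I would apply $d_v$ at $x = p$ to the identity $r(x, p) = r(x, z) + r(p, z) - 2\jj{z}{x}{p}$ together with the elementary fact $d_v r(p, p) = 1$ (immediate from the series--parallel formula $r(x, p) = t(\li - t + R_i)/(\li + R_i)$ on each incident edge) to obtain $d_v r(p, z) = 1 + 2\, d_v\jj{z}{p}{p}$; summing over directions $v$ at $p$ and invoking Kirchhoff's current law $\sum_v d_v\jj{z}{p}{y} = \dd{z}(p) - \dd{y}(p)$ at $y = p$ produces $\sum_v d_v r(p, z) = \vb(p) - 2$ when $p \neq z$ and $\vb(z)$ when $p = z$, which are exactly the atomic coefficients of the measure $2\nu - 2\dd{z}$.

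The main obstacle will be the Y--$\Delta$ bookkeeping on edges: one must verify that the three-terminal reduction inside $\ga - e_i$ yields a genuine star with arms $\alpha, \beta, \gamma$ satisfying $\alpha + \beta = R_i$, $\alpha + \gamma = r_{\ga - e_i}(\pp, z)$, $\beta + \gamma = r_{\ga - e_i}(\qq, z)$, and then extract from this the clean quadratic form of $r(x, z)$ with $z$-independent curvature on $e_i$. Once this quadratic structure and the identity $d_v r(p, p) = 1$ are in hand, the remaining vertex computation collapses cleanly via Kirchhoff's law, and the theorem follows from uniqueness of $\mu_{\can}$.
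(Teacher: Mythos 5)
The paper offers no proof of this statement: it is imported verbatim from Chinburg--Rumely via the citation to [CR, Theorem 2.11], so there is no internal argument to compare yours against. Your blind derivation is a correct, essentially self-contained proof. The mass count is right: the handshake identity gives $\sum_{p\in \vv{\ga}}(1-\tfrac{1}{2}\va(p))=v-e$, and $\sum_i L_i/(L_i+R_i)=e-v+1$ is exactly \eqnref{eqn genus} of the paper (itself quoted from [CR, p.~26]), so you may cite that directly instead of routing through Foster's theorem. The constancy argument is also sound: $r(x,y)=r(x,z)+r(y,z)-2j_z(x,y)$ together with $\DD{x}j_z(x,y)=\dd{y}-\dd{z}$ and $\nu(\ga)=1$ reduces everything to $\DD{x}r(x,z)=2\nu-2\dd{z}$, which is the known Baker--Rumely identity $\DD{x}r(x,z)=2\mu_\can-2\dd{z}$ in disguise. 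The ``main obstacle'' you flag is not really one: the three-terminal star reduction of $\ga-e_i$ and the quadratic formula $r(x,z)=\frac{(t+R_{a_i,z})(L_i-t+R_{b_i,z})}{L_i+R_i}+R_{c_i,z}$ with $R_{a_i,z}+R_{b_i,z}=R_i$ are already stated in the paper (\figref{fig 2termpnew3} and the first line of the proof of \lemref{lem crtterm}), so the $z$-independent curvature $-2/(L_i+R_i)$ is immediate, and the bridge case $R_i=\infty$ checks out since both sides vanish on such an edge. Your vertex computation, including $d_{\vw}r(p,p)=1$ and the resulting coefficients $\va(p)-2$ for $p\neq z$ and $\va(z)$ at $z$, matches the atoms of $2\nu-2\dd{z}$. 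Two small points to make this airtight: (i) the uniqueness you invoke applies to signed Borel measures of total mass $1$ with $|\mu|(\ga)<\infty$, and you should note that $\nu$ has finite total variation and that $\Delta f=0$ forces $f$ to be constant (positivity of the Dirichlet form) before concluding $j_\nu(x,x)$ is constant; (ii) the interchange of $\DD{x}$ with $\int\cdot\,d\nu(y)$ deserves a sentence, though it is immediate here because $\nu$ is a finite sum of point masses and Lebesgue measures on edges.
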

Here is another expression for $\tg$:
\begin{lemma}\cite{REU}\label{lemtau1}
For any metrized graph $\ga$ and its resistance function $r(x,y)$,
$$\tau(\Gamma) = \frac{1}{2}\int_{\ga} r(x,y) d\mucan(y).$$
\end{lemma}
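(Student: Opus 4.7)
The plan combines three inputs: \lemref{lemtau1}, which gives $\tg = \frac{1}{2}\int_\ga r(x,y)\,d\mucan(y)$ for every $x \in \ga$; \thmref{thmCanonicalMeasureFormula} for the explicit form of $\mucan$; and the identity $r(p,q) = l_{pp}^+ - 2l_{pq}^+ + l_{qq}^+$ for vertices $p,q$ from \cite{RB2}, which the paper flags at the start of \secref{sec taudisc}. The first step is to rewrite the density $1/(L_i + R_i)$ appearing in $\mucan$ entirely in terms of $\lm$ and $\plm$. Since $l_{\pp \qq} = -1/L_i$ for a simple edge and since $L_i, R_i$ in parallel give $r(\pp, \qq) = l_{\pp \pp}^+ - 2 l_{\pp \qq}^+ + l_{\qq \qq}^+$, elementary algebra yields
\begin{equation*}
\frac{1}{L_i + R_i} \;=\; -\, l_{\pp \qq}^{\,2}\Bigl(\frac{1}{l_{\pp \qq}} + l_{\pp \pp}^+ - 2\,l_{\pp \qq}^+ + l_{\qq \qq}^+\Bigr),
\end{equation*}
so the parenthesized factor of the theorem's first term is precisely what encodes the continuous part of $\mucan$.

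Because $\int_\ga r(x,y)\,d\mucan(y) = 2\tg$ is independent of $x$, I then integrate against $d\mucan(x)$ to obtain the symmetric form $\tg = \tfrac12\iint r(x,y)\,d\mucan(x)\,d\mucan(y)$. Decomposing $\mucan$ into its discrete vertex measure $\sum_p(1 - \tfrac12\va(p))\dd{p}$ and its continuous edge density $\sum_i dx/(L_i + R_i)$ splits the double integral into vertex-vertex, vertex-edge, and edge-edge pieces. For the edge integrations I use the universal interpolation
\begin{equation*}
r(p, y(t)) \;=\; \frac{(L_i - t)\,r(p, \pp) + t\,r(p, \qq)}{L_i} + \frac{t(L_i - t)}{L_i + R_i},
\end{equation*}
valid for $y$ at parameter $t \in [0, L_i]$ on $e_i$ and any $p$ outside the interior of $e_i$, together with the closed form $r(x(s), y(t)) = |s-t|\bigl(L_i + R_i - |s-t|\bigr)/(L_i + R_i)$ when $x, y$ lie on the same edge. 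These reduce the edge-edge piece to polynomial one- and two-dimensional integrals, governed by $\int_0^{L_i} t(L_i - t)\,dt = L_i^3/6$. Combining these with the two factors of $1/(L_i + R_i)$ from the density and then invoking the first-paragraph identity, the same-edge part of the edge-edge integrand collapses into $-\tfrac{1}{12}\sum_{e_i \in \ee{\ga}} l_{\pp \qq}(\cdots)^2$, which is the first term of the theorem.

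The remaining vertex-vertex and vertex-edge pieces are handled by substituting $r(p,q) = l_{pp}^+ - 2 l_{pq}^+ + l_{qq}^+$ and using the identities $\sum_q l_{pq}^+ = 0$ (rows of $\plm$ sum to zero) and $\sum_p l_{pp}^+ = \text{trace}(\plm)$. After these substitutions, the vertex pieces collapse into a linear combination of $\text{trace}(\plm)/v$ and $\sum_{q,s} l_{qs}\, l_{qq}^+\, l_{ss}^+$, producing the theorem's last two terms. The main obstacle is the bookkeeping needed to eliminate the combinatorial factors $1 - \tfrac12 \va(p)$ coming from the discrete part of $\mucan$; since $\va(p)$ is a combinatorial degree while $l_{pp}$ is a weighted conductance sum, this matching requires Foster's identity $\sum_{e_i \in \ee{\ga}} R_i/(L_i + R_i) = v - 1$ together with a careful accounting of which vertex factor pairs with which edge contribution. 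Once these cancellations are carried out, the three terms of the theorem emerge in the claimed form.
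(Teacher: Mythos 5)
Your proposal does not prove the statement at hand. The statement to be proved is \lemref{lemtau1} itself, namely $\tg = \frac{1}{2}\int_{\ga} r(x,y)\, d\mucan(y)$; yet the very first sentence of your argument lists \lemref{lemtau1} as one of its three inputs, and the rest of the argument is a sketch of the formula in \thmref{thm disc2} (the expression for $\tg$ in terms of $\lm$ and $\plm$). Assuming the target statement in order to derive something else is circular as a proof of the target, and the conclusion you arrive at is a different theorem altogether. (Even as a proof of \thmref{thm disc2} your route differs from the paper's, which goes through \propref{proptau}, \lemref{lem discr1}, \thmref{thm disc1} and \lemref{lem disc9x} rather than a double integration of $r$ against $\mucan\times\mucan$; but that is beside the point here.)

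What \lemref{lemtau1} actually requires is short and uses only the definitions recalled in \secref{sec tau constant}. By definition $\tg = \frac{1}{2}\, j_{\mu}(x,x)$ for $\mu=\mucan$, where $j_{\mu}(x,y) = \int_{\ga} j_{\zeta}(x,y)\, d\mu(\zeta)$ and $\mucan$ is the unique measure of total mass $1$ making $j_{\mu}(x,x)$ constant in $x$. Since the voltage function satisfies $j_{x}(y,y)=r(x,y)$, equivalently $j_{\zeta}(x,x)=r(\zeta,x)$, one gets
\begin{equation*}
\tg \;=\; \frac{1}{2}\int_{\ga} j_{\zeta}(x,x)\, d\mucan(\zeta) \;=\; \frac{1}{2}\int_{\ga} r(x,\zeta)\, d\mucan(\zeta),
\end{equation*}
which is the claim after relabeling the integration variable and using the symmetry of $r$. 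None of the machinery you invoke --- the explicit form of $\mucan$ from \thmref{thmCanonicalMeasureFormula}, the identity $r(p,q)=l_{pp}^+-2l_{pq}^++l_{qq}^+$, the interpolation of $r$ along edges, or Foster-type identities --- plays any role in establishing this lemma.
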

Another description of $\tau(\Gamma)$ is as follows:
\begin{lemma}\cite[Lemma 14.4]{BRh}\label{lemtauformula}
For any fixed $p \in \Gamma$, we have
$
\tau(\Gamma) = \frac{1}{4} \int_\Gamma
\left( \ddx r(x,p) \right)^2 dx .
$
\end{lemma}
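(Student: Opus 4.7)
The plan is to use the ``squared derivative'' formula of Lemma~\ref{lemtauformula}: $\tg = \tfrac{1}{4}\int_\ga (\tfrac{d}{dx} r(x,p))^2\, dx$ for any fixed $p \in \ga$. I would take $p \in \vv{\ga}$ and compute the integral edge by edge.

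The first task is to write $r(p, y)$ explicitly when $y$ varies along an edge $e_k$ with endpoints $p_k, q_k$ and length $L_k$. Viewing $y$ at arc parameter $s \in [0, L_k]$ as a new vertex, the two-port reduction of $\ga - e_k$ at $p_k, q_k$---whose two transfer resistances can be read off via the identity $j_z(x,y) = \tfrac12[r(x,z) + r(y,z) - r(x,y)]$---gives, for $p$ not on $e_k$,
\begin{equation*}
r(p, y) \;=\; (1 - s/L_k)\, r(p, p_k) + (s/L_k)\, r(p, q_k) + \frac{s(L_k - s)}{L_k + R_k},
\end{equation*}
while for $p = p_k$ an endpoint, a direct parallel-resistance argument gives $r(p, y) = s(L_k + R_k - s)/(L_k + R_k)$. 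In either case, squaring $\partial_s r(p, y)$ and integrating over $[0, L_k]$ yields the same value $(r(q_k, p) - r(p_k, p))^2/L_k + L_k^3/(3(L_k + R_k)^2)$. Summing over all edges and dividing by $4$,
\begin{equation*}
\tg \;=\; \frac{1}{4}\sum_{e_k \in \ee{\ga}} \frac{(r(q_k, p) - r(p_k, p))^2}{L_k} + \frac{1}{12}\sum_{e_k \in \ee{\ga}} \frac{L_k^3}{(L_k + R_k)^2}.
\end{equation*}
The second sum is already $p$-independent; using $\tfrac{1}{r(p_k, q_k)} = \tfrac{1}{L_k} + \tfrac{1}{R_k}$ (so $L_k - r(p_k, q_k) = L_k^2/(L_k + R_k)$) together with $L_k = -1/l_{p_k q_k}$ (aggregating parallel edges if any), it rewrites as $-\tfrac{1}{12}\sum_{e_k} l_{p_k q_k}\bigl(1/l_{p_k q_k} + r(p_k, q_k)\bigr)^2$, the first term of the theorem.

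For the first sum I would average uniformly over $p \in \vv{\ga}$ (valid because this sum equals $\tg$ minus a $p$-independent quantity, hence is itself $p$-independent). Applying $r(a, b) = \pdl{a}{a} - 2\pdl{a}{b} + \pdl{b}{b}$, one has $r(q_k, p) - r(p_k, p) = (\pdl{q_k}{q_k} - \pdl{p_k}{p_k}) + 2(\pdl{p_k}{p} - \pdl{q_k}{p})$, and upon squaring the cross term vanishes after summing over $p$ by $\sum_p \pdl{a}{p}=0$, leaving
\begin{equation*}
\frac{1}{v}\sum_{p}\sum_{e_k}\frac{(r(q_k,p)-r(p_k,p))^2}{L_k} \;=\; \sum_{e_k}\frac{(\pdl{p_k}{p_k}-\pdl{q_k}{q_k})^2}{L_k} + \frac{4}{v}\sum_{e_k}\frac{(\plm^2)_{p_kp_k}-2(\plm^2)_{p_kq_k}+(\plm^2)_{q_kq_k}}{L_k}.
\end{equation*}
Via the weighted-Laplacian identity $\sum_{e_k} w_k (d_{p_k} - d_{q_k})^2 = \sum_{q,s}\dl{q}{s}\,d_q d_s$ with $w_k = 1/L_k$ and $d_q = \pdl{q}{q}$, the first summand becomes $\sum_{q,s}\dl{q}{s}\pdl{q}{q}\pdl{s}{s}$; the same identity applied with $\plm^2$ in place of the outer product expresses the second summand as $\tfrac{4}{v}\operatorname{trace}(\lm\plm^2)$, which collapses via $\lm\plm = I - J/v$ and $\operatorname{trace}(J\plm)=0$ to $\tfrac{4}{v}\operatorname{trace}(\plm)$. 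Dividing by $4$ produces exactly the middle and last terms of the theorem.

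The main obstacle is the derivation of the quadratic-correction formula $r(p, y) = (\text{linear interpolation in }s) + s(L_k-s)/(L_k+R_k)$ for $y$ on an edge. This correction, modest in appearance, is precisely what produces the $L_k^3/(L_k+R_k)^2$ contribution after squaring and integrating $\partial_s r$; naively treating $r(p, \cdot)$ as linear on each edge would drop this term and give an incorrect identity. Once the correction is in hand, all remaining manipulations are routine linear algebra with $\lm$, $\plm$, and the trace identity $\operatorname{trace}(\lm\plm^2)=\operatorname{trace}(\plm)$.
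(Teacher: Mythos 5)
Your proposal does not prove the statement in question; it assumes it. Its very first sentence takes the identity $\tg = \tfrac{1}{4}\int_\ga \big(\ddx r(x,p)\big)^2\,dx$ as given and then uses it to derive the paper's main formula (\thmref{thm disc2}) in terms of $\lm$ and $\plm$. That is an argument for a different result, and with respect to \lemref{lemtauformula} itself it is circular. Nothing in the proposal connects the integral $\int_\ga \big(\ddx r(x,p)\big)^2\,dx$ to what $\tg$ actually \emph{is}, namely the constant value $\tfrac{1}{2}j_{\mu}(x,x)$ for $\mu=\mucan$; the canonical measure never appears in your argument.

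For the record, the paper does not prove this lemma either: it is quoted from Baker and Rumely \cite[Lemma 14.4]{BRh}. A genuine proof must start from the definition of $\tg$ via $\mucan$ --- for instance from \lemref{lemtau1}, $\tg = \tfrac{1}{2}\int_\ga r(x,p)\,d\mucan(x)$, combined with the explicit description of $\mucan$ in \thmref{thmCanonicalMeasureFormula} --- and then show, by integration by parts on each edge (using $\tfrac{d^2}{dx^2}r(p,x) = -2/(\li+\ri)$ on $e_i$ and collecting the boundary terms at the vertices, essentially the computation in the proof of \lemref{lem crtterm} run in reverse), that this equals $\tfrac{1}{4}\int_\ga\big(\ddx r(x,p)\big)^2\,dx$, with independence of $p$ coming along for free. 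Your edge-by-edge evaluation of the integral (the quadratic formula for $r(p,y)$ along an edge and the resulting per-edge contribution $\tfrac{(r(q_k,p)-r(p_k,p))^2}{L_k} + \tfrac{L_k^3}{3(L_k+R_k)^2}$) is correct and would be a useful ingredient in such a proof, but by itself it only \emph{computes} the integral; it does not show that the integral equals $4\tg$. Closing that gap by invoking \propref{proptau} would also be circular, since the paper obtains \propref{proptau} precisely by computing the integral in \lemref{lemtauformula}.
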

\begin{remark}
\label{remvalence}
Let $\ga$ be any metrized graph with resistance function $r(x,y)$.
If we enlarge $\vv{\ga}$ by including points $p \in
\ga$ with $\va(p)=2$, the resistance function does not change, and thus $\tg$ does not change by \lemref{lemtauformula}.
\end{remark}
Note that $\tg$ is an invariant of the metrized graph $\ga$, which depends only
on the topology and the edge length distribution of $\ga$.

Let $\ga-e_i$ be a connected graph for an edge $e_i \in \ee{\ga}$ of length $\li$. Suppose $\pp$ and $\qq$ are the end points of $e_i$, and $p \in \ga-e_i$. By applying circuit reductions,
$\ga-e_i$ can be transformed into a $Y$-shaped graph with the same resistances between $\pp$, $\qq$, and $p$ as in $\ga-e_i$.
More details on this can be found in \cite[Section 2]{C2}.
Since $\ga-e_i$ has such circuit reduction, $\ga$ has the circuit reduction as illustrated in \figref{fig 2termpnew3}  with  the corresponding voltage values on each segment, where $\hj{x}{y}{z}$ is the voltage function in $\ga-e_i$. Throughout this paper, we will use the following notation:
$R_{a_i,p} := \hj{\pp}{p}{\qq}$, $R_{b_i,p} := \hj{\qq}{\pp}{p}$, $R_{c_i,p} := \hj{p}{\pp}{\qq}$, and $\ri$ is the resistance
between $\pp$ and $\qq$ in $\ga-e_i$. Note that $R_{a_i,p}+R_{b_i,p}=\ri$ for each $p \in \ga$. When $\ga-e_i$ is not connected, we set $R_{b_i,p}=\ri=\infty$ and $R_{a_i,p}=0$ if $p$ belongs to the component of $\ga-e_i$
containing $\pp$, and we set $R_{a_i,p}=\ri=\infty$ and $R_{b_i,p}=0$ if $p$ belongs to the component of $\ga-e_i$
containing $\qq$.
\begin{figure}
\centering
\includegraphics[scale=0.7]{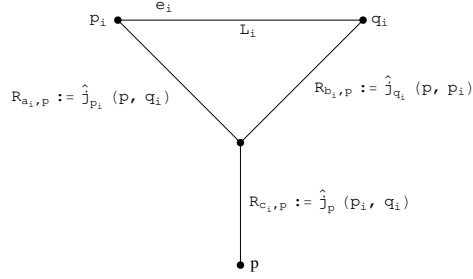} \caption{Circuit reduction of $\ga-e_i$ with reference to $\pp$, $\qq$ and $p$.} \label{fig 2termpnew3}
\end{figure}

By computing the integration in \lemref{lemtauformula}, one obtains the following formula for the tau constant:
\begin{proposition}\cite{REU} \label{proptau}
Let $\Gamma$ be a metrized graph, and let $L_i$ be the length of the
edge $e_{i}$, for $i \in \{1,2, \dots, e\}$.
Using the notation above,
if we fix a vertex $p$ we have
\[
\ta{\ga} = \frac{1}{12} \sum_{e_i \in \ga} \left(
\frac{\li^3+3\li(R_{a_{i},p}-R_{b_{i},p})^2}{(\li+\ri)^2}\right ).
\]
Here, if $\ga-e_i$ is not connected, i.e. $\ri$ is infinite, the
summand corresponding to $e_i$ should be replaced by $3\li$, its limit as $\ri \longrightarrow
\infty$.
\end{proposition}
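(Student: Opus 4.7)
The plan is to combine \propref{proptau} with the identity of Rao--Bapat cited in the paper as [RB2],
$$r(p, q) \;=\; l_{pp}^+ \,-\, 2\, l_{pq}^+ \,+\, l_{qq}^+ \qquad \text{for } p, q \in \vv{\ga},$$
which translates every vertex-pair resistance into pseudo-inverse entries. Under the discrete-Laplacian convention that $l_{\pp \qq} = -1/L_i$ for an edge $e_i$ of length $L_i$ joining $\pp$ and $\qq$, the parallel combination of $e_i$ with $\ga - e_i$ gives $r(\pp, \qq) = L_i R_i/(L_i + R_i)$, hence $L_i - r(\pp, \qq) = L_i^2/(L_i + R_i)$. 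Squaring, dividing by $L_i$, and applying the Rao--Bapat identity rewrites the summand $L_i^3/(L_i + R_i)^2$ of \propref{proptau} as $-l_{\pp \qq}\bigl(1/l_{\pp \qq} + l_{pp}^+ - 2 l_{pq}^+ + l_{qq}^+\bigr)^2$, matching the first sum of the theorem term by term.

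For the remaining piece $\frac{1}{4}\sum_{e_i} L_i (R_{a_i, p} - R_{b_i, p})^2/(L_i + R_i)^2$ of \propref{proptau}, which depends on a free vertex $p$, I would switch to the integral representation $\tg = \frac{1}{2}\int_\ga r(x, y)\, d\mu_{can}(y)$ from \lemref{lemtau1}, expand $\mu_{can}$ via \thmref{thmCanonicalMeasureFormula} into its vertex-Dirac and edge-integral parts, and symmetrize by averaging $x$ over $\vv{\ga}$, giving
$$\tg \;=\; \frac{1}{2v}\sum_{p \in \vv{\ga}}\!\Bigl[\sum_{q \in \vv{\ga}}\bigl(1 - \tfrac{\va(q)}{2}\bigr) r(p, q) \;+\; \sum_{e_i \in \ee{\ga}} \frac{1}{L_i + R_i} \int_{e_i} r(p, y)\, dy\Bigr].$$
Applying [RB2] to each vertex-pair resistance in the first inner sum, together with the row-sum identity $\sum_q l_{pq}^+ = 0$ for the pseudo-inverse of a connected-graph Laplacian and the Euler relation $\sum_q (1 - \va(q)/2) = v - e$, should produce both the $\frac{1}{v}\tr(\plm)$ contribution and part of the bilinear $\frac{1}{4}\sum_{q, s} l_{qs} l_{qq}^+ l_{ss}^+$ term. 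The edge integrals, evaluated using the piecewise-quadratic form of $r(p, y)$ on each $e_i$ read off from the circuit reduction in \figref{fig 2termpnew3}, should supply the remaining cross-terms and cancel the $(R_{a_i, p} - R_{b_i, p})^2$ dependence on the base vertex $p$.

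The main obstacle is the conversion step: the quantities $R_{a_i, p}, R_{b_i, p}, R_{c_i, p}$ are defined via the voltage function $\hj{\cdot}{\cdot}{\cdot}$ in $\ga - e_i$, whereas the formula in the theorem is expressed in pseudo-inverse entries of the Laplacian of $\ga$ itself. Bridging the two requires either a rank-one update relating $\plm$ on $\ga$ to the pseudo-inverse of the Laplacian of $\ga - e_i$, or direct circuit-theoretic identities rewriting each $R_{\cdot, p}$ as a linear combination of entries of $\plm$ for $\ga$. Once this translation is in place and the sign conventions for multi-edges and self-loops contributing to $l_{\pp \qq}$ are tracked consistently, collecting the pieces should assemble the three-term expression in the theorem.
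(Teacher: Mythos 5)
Your proposal does not prove \propref{proptau}; it presupposes it. You open with ``combine \propref{proptau} with the identity of Rao--Bapat'' and then spend the entire argument converting the two summands of \propref{proptau} into pseudo-inverse entries of the discrete Laplacian. That is a rough outline of the proof of the paper's main result, \thmref{thm disc2}, for which \propref{proptau} is an \emph{input} --- it is not an argument for \propref{proptau} itself, and read as such it is circular. Nothing in your write-up explains where the coefficient $\frac{1}{12}$, the factor $3$, or the antisymmetric combination $R_{a_i,p}-R_{b_i,p}$ come from; moreover the discrete Laplacian and its pseudo inverse play no role whatsoever in the statement you were asked to prove, which is a purely circuit-theoretic identity.

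The intended proof is a short direct computation from \lemref{lemtauformula}. Fix $p\in\vv{\ga}$. By the circuit reduction of \figref{fig 2termpnew3}, if $x$ lies on the edge $e_i$ with arclength parameter $x\in[0,\li]$, then
\[
r(x,p)=\frac{(x+R_{a_i,p})(\li-x+R_{b_i,p})}{\li+\ri}+R_{c_i,p},
\qquad\text{so}\qquad
\ddx r(x,p)=\frac{\li-2x+R_{b_i,p}-R_{a_i,p}}{\li+\ri}.
\]
Therefore
\[
\int_0^{\li}\Big(\ddx r(x,p)\Big)^2\,dx
=\frac{1}{(\li+\ri)^2}\int_0^{\li}\big(\li-2x+R_{b_i,p}-R_{a_i,p}\big)^2\,dx
=\frac{\frac{1}{3}\li^3+\li\big(R_{a_i,p}-R_{b_i,p}\big)^2}{(\li+\ri)^2},
\]
and summing over $e_i\in\ee{\ga}$ and multiplying by $\frac{1}{4}$ as in \lemref{lemtauformula} gives exactly the claimed formula. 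When $\ga-e_i$ is disconnected, the paper's conventions give $(R_{a_i,p}-R_{b_i,p})^2=\ri^2$ with $\ri=\infty$, and the summand $\frac{\li^3+3\li\ri^2}{(\li+\ri)^2}$ tends to $3\li$ as $\ri\to\infty$; equivalently, on a bridge $\ddx r(x,p)=\pm1$, so the edge contributes $\li$ to the integral and $\frac{1}{4}\li=\frac{1}{12}\cdot 3\li$ to $\tg$. This is the content of \cite[Proposition 2.9]{C2}, to which the paper defers.
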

The proof of \propref{proptau} can be found in \cite[Proposition 2.9]{C2}.
We will use the following remark in \secref{sec taudisc}.
\begin{remark}\label{rem independence pdif}
It follows from \lemref{lemtauformula} and \propref{proptau} that
$\sum_{e_i \in \ee{\ga}}
\frac{\li(R_{a_{i},p}-R_{b_{i},p})^2}{(\li+\ri)^2}$ is independent
of the chosen vertex $p \in \vv{\ga}$.
\end{remark}
Let $\pp$ and $\qq$ be the end points of the edge $e_i$ as in \figref{fig 2termpnew3}. It follows from parallel and series reductions that
\begin{equation}\label{eqn2term0}
\begin{split}
r(p_i,p)=\frac{(\li+R_{b_i,p})R_{a_i,p}}{\li+\ri}+R_{c_i,p},
\quad \text{and} \quad r(q_i,p)=\frac{(\li+R_{a_i,p})R_{b_i,p}}{\li+\ri}+R_{c_i,p}.
\end{split}
\end{equation}
Therefore, $r(\pp,p)-r(\qq,p) = \frac{\li(R_{a_i,p}-R_{b_i,p})}{\li+\ri}$,
and so
\begin{equation}\label{eqn2termeq1}
\begin{split}
\sum_{e_i \in \,
\ee{\ga}}\frac{\li(R_{a_{i},p}-R_{b_{i},p})^2}{(\li+\ri)^2}=
\sum_{e_i \in \, \ee{\ga}}\frac{(r(\pp,p)-r(\qq,p))^2}{\li}.
\end{split}
\end{equation}
\begin{proposition}\label{prop tau canonical}
Let $\ga$ be a metrized graph with the resistance function $r(x,y)$, and let each edge $e_i \in \ee{\ga}$
be parametrized by a segment $[0,\li]$, under its arclength parametrization. Then for any $p \in \vv{\ga}$,
$$ \tg = -\frac{1}{4} \sum_{q \in  \vv{\ga}}(\va(q)-2)r(p,q)
+ \frac{1}{2} \sum_{e_i \in \, \ee{\ga}} \frac{1}{\li+\ri} \int_{0}^{\li} r(p,x) dx.$$
\end{proposition}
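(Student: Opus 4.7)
The plan is simply to combine Lemma \ref{lemtau1} with the explicit formula for $\mucan$ given in Theorem \ref{thmCanonicalMeasureFormula}. Lemma \ref{lemtau1} tells us that for \emph{any} point (in particular, for the fixed $p \in \vv{\ga}$ appearing in the statement),
\[
\tg \ =\ \frac{1}{2}\int_{\ga} r(p,y)\, d\mucan(y),
\]
so the content of the proposition is just to unpack the right-hand side using the two-part description of $\mucan$.

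Next, I would substitute
\[
\mucan \ = \ \sum_{q\in\vv{\ga}} \bigl(1-\tfrac{1}{2}\va(q)\bigr)\,\dd{q} \ +\ \sum_{e_i\in\ee{\ga}}\frac{dx}{\li+\ri}
\]
into the integral and split it into a discrete part and a continuous part. The discrete part contributes
\[
\frac{1}{2}\sum_{q\in\vv{\ga}} \bigl(1-\tfrac{1}{2}\va(q)\bigr)\, r(p,q) \ =\ -\frac{1}{4}\sum_{q\in\vv{\ga}}(\va(q)-2)\,r(p,q),
\]
where the term $q=p$ is harmless because $r(p,p)=0$. Using the arclength parametrization of each edge $e_i$ by $[0,\li]$, the continuous part contributes
\[
\frac{1}{2}\sum_{e_i\in\ee{\ga}} \frac{1}{\li+\ri}\int_{0}^{\li} r(p,x)\, dx.
\]
Adding these two pieces gives exactly the claimed identity.

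There is essentially no obstacle: the proof is a direct substitution. The only small points to check are that $r(p,\cdot)$ is integrable on each edge (which follows from the joint continuity of $r$ recalled in Section \ref{sec tau constant}), that the Dirac-mass convention in Theorem \ref{thmCanonicalMeasureFormula} evaluates $r(p,\cdot)$ at each vertex as written, and that one is free to apply Lemma \ref{lemtau1} with a vertex $p$ rather than a generic point of $\ga$ since the integral is constant in its free variable. If one wanted to avoid even the trivial $q=p$ issue, one could restrict the sum to $q\neq p$, but since $r(p,p)=0$ the formula is cleaner as stated. No integration by parts or circuit-reduction arguments are required for this proposition, although such arguments will be indispensable in the subsequent sections when this formula is paired with the Laplacian matrix identities.
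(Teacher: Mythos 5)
Your proposal is correct and follows exactly the same route as the paper's own proof: apply Lemma \ref{lemtau1} at the fixed vertex $p$ and then substitute the explicit two-part formula for $\mucan$ from Theorem \ref{thmCanonicalMeasureFormula}, splitting the integral into the Dirac-mass sum and the edge integrals. Your write-up is in fact slightly more careful than the paper's (which contains a small typographical slip, writing $\va(p)$ for $\va(q)$ and dropping a factor of $\tfrac{1}{2}$ in the displayed intermediate step), so nothing further is needed.
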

\begin{proof} We have $\tg = \frac{1}{2}\int_{\ga} r(p,x) d\mucan(x)$, by
\lemref{lemtau1}. Then by \thmref{thmCanonicalMeasureFormula},
\begin{equation*}
\begin{split}
\tg = \frac{1}{2}\sum_{q \in \vv{\ga} } (1 - \frac{1}{2}\va(p)) \int_{\ga} r(p,x) \delta_q(x)
+ \sum_{e_i \in \, \ee{\ga}} \frac{1}{\li+\ri} \int_{0}^{\li} r(p,x) dx .
\end{split}
\end{equation*}
This gives the result.
\end{proof}
\begin{lemma}\label{lem crtterm}
Let $\pp$ and $\qq$ be end points of $e_i \in \ee{\ga}$. For any $p
\in \vv{\ga}$,
\begin{equation*}
\begin{split}
\sum_{e_i \in \,
\ee{\ga}}\frac{\li(R_{a_{i},p}-R_{b_{i},p})^2}{(\li+\ri)^2} &
=\sum_{e_i \in \, \ee{\ga}}\frac{\li}{\li + \ri}
\big(r(\pp,p)+r(\qq,p)\big) - \sum_{q \in
\vv{\ga}}(\va(q)-2)r(p,q).
\end{split}
\end{equation*}
\end{lemma}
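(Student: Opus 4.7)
My approach is to combine the two independent expressions for $\tg$ already in hand in the paper, namely Proposition~\ref{proptau} and Proposition~\ref{prop tau canonical}, both taken with the same base vertex $p$. Equating these formulas, multiplying through by $4$, and rearranging, the asserted identity becomes equivalent to the edge-by-edge statement
$$
\frac{2}{\li+\ri}\int_0^{\li} r(p,x)\,dx \;=\; \frac{\li}{\li+\ri}\bigl(r(\pp,p)+r(\qq,p)\bigr) + \frac{\li^3}{3(\li+\ri)^2}.
$$
Once this is established on each edge, the leftover term $\tfrac{1}{3}\sum_{e_i}\tfrac{\li^3}{(\li+\ri)^2}$ that is produced when equating the two expressions for $\tg$ is absorbed, and the claimed identity follows after summing over $e_i\in \ee{\ga}$.

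To prove the per-edge identity, I first derive an explicit formula for $r(p,x)$ when $x$ lies on $e_i$. Starting from the $Y$-reduction of $\ga-e_i$ in Figure~\ref{fig 2termpnew3}, whose branches from the central node to $\pp,\qq,p$ have resistances $R_{a_i,p}, R_{b_i,p}, R_{c_i,p}$, I reattach $e_i$ and split it at the point $x$, parametrized by arclength $t$ from $\pp$. A direct Kirchhoff calculation on the resulting four-node network yields
$$
r(p,x) \;=\; R_{c_i,p} + \frac{(R_{a_i,p}+t)\bigl(R_{b_i,p}+\li-t\bigr)}{\li+\ri},
$$
which reduces at $t=0$ and $t=\li$ back to the two identities in (\ref{eqn2term0}), confirming consistency.

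Finally, this expression is quadratic in $t$ with leading coefficient $-\tfrac{1}{\li+\ri}$. The elementary quadratic integration identity $\int_0^{L}(at^2+bt+c)\,dt = \tfrac{L}{2}(f(0)+f(L)) - \tfrac{aL^3}{6}$ therefore gives
$$
\int_0^{\li} r(p,x)\,dx \;=\; \frac{\li}{2}\bigl(r(\pp,p)+r(\qq,p)\bigr) + \frac{\li^3}{6(\li+\ri)},
$$
and multiplying by $\tfrac{2}{\li+\ri}$ produces exactly the per-edge identity needed above. The disconnected case $\ri=\infty$ is handled by taking limits, as in Proposition~\ref{proptau}. The main obstacle I anticipate is the circuit reduction step that yields the explicit quadratic formula for $r(p,x)$ on $e_i$: it extends the reduction producing (\ref{eqn2term0}) by one additional step, splitting $e_i$ at $x$ and solving Kirchhoff's laws across three $Y$-branches and two half-edges of $e_i$.
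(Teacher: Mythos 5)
Your argument is correct, and it reaches the identity by a recognizably different (though closely parallel) route from the paper's. The paper starts from Lemma~\ref{lemtauformula}, $4\tg=\int_\ga \big(\ddx r(p,x)\big)^2dx$, integrates by parts on each edge, and exploits the fact that $r(p,\cdot)$ is quadratic on $e_i$ with $\frac{d^2}{dx^2}r(p,x)=-2/(\li+\ri)$: the boundary terms $r(p,x)\,\ddx r(p,x)\big|_0^{\li}$, evaluated via (\ref{eqn2term0}), produce $\sum_{e_i}\frac{\li}{\li+\ri}\big(r(\pp,p)+r(\qq,p)\big)$ together with the left-hand sum, while the remaining $\int r\cdot r''$ term is converted by Proposition~\ref{prop tau canonical} into $4\tg+\sum_q(\va(q)-2)r(p,q)$. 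You instead import the already-integrated form of $\int(\ddx r)^2$ as Proposition~\ref{proptau}, equate it with Proposition~\ref{prop tau canonical}, and reduce everything to the per-edge trapezoid relation $\int_0^{\li}r(p,x)\,dx=\frac{\li}{2}\big(r(\pp,p)+r(\qq,p)\big)+\frac{\li^3}{6(\li+\ri)}$; your algebra here checks out. The circuit-reduction step you flag as the main anticipated obstacle is not one: the explicit quadratic formula $r(p,x)=\frac{(x+R_{a_i,p})(\li-x+R_{b_i,p})}{\li+\ri}+R_{c_i,p}$ on $e_i$ is exactly what the paper records in the first line of its own proof, so you may quote it rather than rederive it from Kirchhoff's laws. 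The net mathematical content of the two proofs is the same --- Proposition~\ref{prop tau canonical} plus the quadratic structure of the resistance along an edge --- but yours is more modular (it never redoes the integration by parts), at the cost of leaning on Proposition~\ref{proptau}, whose proof the paper only cites. One small point to make explicit: for a bridge $e_i$ your per-edge identity degenerates to $0=0$ while the summand $\li(R_{a_i,p}-R_{b_i,p})^2/(\li+\ri)^2$ on the left of the lemma tends to $\li$; this is consistent with the limiting conventions built into Proposition~\ref{proptau} (whose bridge summand $3\li$ splits as $0+3\li$), but ``handled by taking limits'' deserves that one explicit line.
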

\begin{proof}
We first note that $r(x,p)=\frac{(x+R_{a_{i},p})(\li-x+R_{b_{i},p})}{\li+\ri}+R_{c_{i},p}$ if
$x \in e_i$.
By \lemref{lemtauformula}, $ 4 \tau(\Gamma) = \int_\Gamma
\left( \ddx r(x,y) \right)^2 dx.$ Thus, integration by parts gives
\begin{equation}\label{eqn crtterm1}
\begin{split}
4 \tg &=\sum_{e_i \in \, \ee{\ga}} \big(r(p,x) \cdot \ddx
r(p,x)\big)|^{\li}_{0} - \sum_{e_i \in \, \ee{\ga}} \int_{0}^{\li}
r(p,x) \frac{d^2}{dx^2}r(p,x) dx.
\end{split}
\end{equation}
Since $\frac{d^2}{dx^2}r(p,x)=\frac{-2}{\li+\ri}$ if $x \in e_i$,
the result follows from \propref{prop tau canonical} and Equations
(\ref{eqn2term0}) and (\ref{eqn crtterm1}).
\end{proof}
Chinburg and Rumely \cite[page 26]{CR} showed that
\begin{equation}\label{eqn genus}
\sum_{e_i \in \ee{\ga}}\frac{\li}{\li +\ri}=e-v+1.
\end{equation}

\section{The discrete Laplacian matrix $\lm$ and its pseudo inverse $\plm$.}\label{sec discrete laplacian}
\vskip .1 in

Throughout this paper, all matrices will have entries in $\RR$. To
have a well-defined discrete Laplacian matrix $\lm$ for a metrized
graph $\ga$, we first choose a vertex set $\vv{\ga}$ for $\ga$ in
such a way that there are no self-loops, and no multiple edges
connecting any two vertices. This can be done for any graph $\ga$ by
enlarging the vertex set by considering additional valence two points as vertices
whenever needed. We will call such a vertex set $\vv{\ga}$ \textit{optimal}.
If distinct vertices $p$ and $q$ are the end points of an edge, we
call them \textit{adjacent} vertices.

Given a matrix $\mm$, let $\mm^{T}$, $tr(\mm)$, $\mm^{-1}$
be the transpose, trace and inverse of  $\mm$, respectively.
Let $\idm_v$ be the $v \times v$ identity matrix, and let $\om$ be
the zero matrix (with the appropriate size if it is not specified).
Let $\jm$ be an $v \times v$ matrix having each entries $1$.

Let $\ga$ be a metrized graph with $e$ edges and with an optimal vertex set
$\vv{\ga}$ containing $v$ vertices. Fix an ordering of the vertices
in $\vv{\ga}$. Let $\{L_1, L_2, \cdots, L_e\}$ be a labeling of the
edge lengths. The matrix $\am=(a_{pq})_{v \times v}$ given by
\[
a_{pq}=\begin{cases} 0 & \quad \text{if $p = q$, or $p$ and $q$ are
not adjacent}.\\
\frac{1}{L_k} & \quad \text{if $p \not= q$, and $p$ and $q$ are
connected
by} \text{ an edge of length $L_k$}\\
\end{cases}
\]
is called the \textit{adjacency matrix} of $\ga$. Let $\dm=\diag(d_{pp})$ be
the $v \times v$ diagonal matrix given by $d_{pp}=\sum_{s \in
\vv{\ga}}a_{ps}$. Then $\lm:=\dm-\am$ is called the \textit{discrete
Laplacian matrix} of $\ga$. That is, $ \lm =(l_{pq})_{v \times v}$ where
\[
l_{pq}=\begin{cases} 0 & \; \, \text{if $p \not= q$, and $p$ and $q$
are not adjacent}.\\
-\frac{1}{L_k} & \; \, \text{if $p \not= q$, and $p$ and $q$ are
connected by} \text{ an edge of length $L_k$}\\
-\sum_{s \in \vv{\ga}-\{p\}}l_{ps} & \; \, \text{if $p=q$}
\end{cases}.
\]
The discrete Laplacian matrix is also known as the generalized (or
the weighted) Laplacian matrix in the literature.
A matrix $\mm$ is called \textit{doubly centered}, if both row and column
sums are $0$. That is, $\mm$ is doubly centered iff $\mm \ay= \om$
and $\ay^T \mm =\om$, where $\ay=[1,1,\cdots,1]^T$.
\begin{example}\cite[Remark 3.1]{C4}\label{ex disc0}
For any metrized graph $\ga$, the discrete Laplacian matrix $\lm$ is symmetric and doubly
centered. That is, $\sum_{p \in \vv{\ga}} \lpq = 0$ for each $q \in
\vv{\ga}$, and $\lpq =l_{qp}$ for each $p$, $q$ $\in \vv{\ga}$.
\end{example}
In our case, $\ga$ is connected by definition. Thus, the discrete Laplacian matrix $\lm$ of $\ga$ is a $(v
\times v)$ matrix of rank $v-1$ if the optimal vertex set $\vv{\ga}$ has $v$ vertices. The null space of $\lm$ is the
$1$-dimensional space spanned by $[1,1,\cdots,1]^T$. Since $\lm$ is
a real symmetric matrix, it has real eigenvalues. Moreover, $\lm$ is
positive semi-definite
More precisely, one of the eigenvalues of $\lm$ is $0$ and the others are positive. Thus, $\lm$
is not invertible. However, it has generalized inverses. In
particular, it has the pseudo inverse $\plm$, also known as the
Moore-Penrose generalized inverse, which is uniquely determined by
the following properties:
\begin{align*}
i)  \quad &\lm \plm \lm  = \lm, & \qquad \qquad
iii) \quad &(\lm \plm)^{T}   = \lm \plm,
\\ ii) \quad &\plm \lm \plm  = \plm, & \qquad \qquad
iv) \quad &(\plm \lm)^{T}   = \plm \lm .
\end{align*}

An $v \times v$ matrix $\mm$ is called an \textit{EP-matrix} if $\pmm \mm=
\mm \pmm$. A necessary and sufficient condition for $\mm$ to be an
EP-matrix is that $\mm u = \lambda u $ iff $\pmm u = \lambda^+ u$,
for each eigenvector $u$ of $\mm$. Another characterization of an
EP-matrix $\mm$ is that $\mm \ax = \om$ iff $\mm^T \ax= \om$, where
$\ax$ is also $v \times v$. Any symmetric matrix is an EP-matrix (\cite[pg 253]{SB}).


We have the following properties:
\begin{align*}
i)  \quad &\lm \text{ and } \plm \text{ are symmetric}, & \qquad \qquad
iii) \quad &\lm \text{ and } \plm \text{ are EP matrices},
\\ ii) \quad &\lm \text{ and } \plm \text{ are doubly centered}, & \qquad \qquad
iv) \quad &\lm \text{ and } \plm \text{ are positive
semi-definite}.
\end{align*}

For a discrete Laplacian matrix $\lm$ of size $v \times v$, we have the following
formula for $\plm$ (see \cite[ch 10]{C-S}):
\begin{equation}\label{eqn disc5}
 \plm = \big( \lm - \frac{1}{v}\jm \big)^{-1} + \frac{1}{v} \jm.
\end{equation}
where $\jm$ is of size $v \times v$ and has all entries $1$.
\begin{remark}\label{rem doubcent1}
Since $\plm$ is doubly centered, $\sum_{p \in \vv{\ga}} \plpq =0$,
for each $q \in \vv{\ga}$. Also, $\plpq = l_{qp}^+$, for each $p$,
$q$ $\in \vv{\ga}$.
\end{remark}

We use the following lemma and its corollary, \corref{cor disc1}, frequently in the rest of this article:
\begin{lemma}\cite[Equation 2.9]{D-M}\label{lem disc1}
Let $\jm$ be of size $v \times v$ as above and let $\lm$ be the
discrete Laplacian of a graph (not necessarily with equal edge lengths).
Then $\lm \plm = \plm \lm = \idm -\frac{1}{v}\jm$.
\end{lemma}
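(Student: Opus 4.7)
The plan is to start from \propref{proptau}, which expresses
\[
\tg = \frac{1}{12}\sum_{e_i \in \ee{\ga}} \frac{\li^3}{(\li+\ri)^2} + \frac{1}{4}\sum_{e_i \in \ee{\ga}} \frac{\li(R_{a_i,p}-R_{b_i,p})^2}{(\li+\ri)^2},
\]
and to translate each of the two summands into the matrix language of $\lm$ and $\plm$ through the resistance--pseudo-inverse identity $r(p,q) = l_{pp}^+ + l_{qq}^+ - 2 l_{pq}^+$ recalled at the start of \secref{sec taudisc}.

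For the first summand, the optimal-vertex-set assumption forces the endpoints $\pp, \qq$ of $e_i$ to be joined only by that edge, so $l_{\pp\qq}=-1/\li$ and $1/l_{\pp\qq}=-\li$. A single parallel reduction gives $r(\pp,\qq)=\li\ri/(\li+\ri)$, whence
\[
\frac{1}{l_{\pp\qq}} + l_{\pp\pp}^+ - 2 l_{\pp\qq}^+ + l_{\qq\qq}^+ \;=\; -\li + r(\pp,\qq) \;=\; -\frac{\li^2}{\li+\ri}.
\]
Squaring, multiplying by $l_{\pp\qq}$, and summing over edges recovers $-\frac{1}{12}\sum_{e_i} l_{\pp\qq}\bigl(\,\cdots\,\bigr)^2 = \frac{1}{12}\sum_{e_i}\li^3/(\li+\ri)^2$, the first piece of \propref{proptau}.

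For the second summand, I would use \eqnref{eqn2termeq1} to rewrite $\sum_{e_i}\li(R_{a_i,p}-R_{b_i,p})^2/(\li+\ri)^2$ as $\sum_{e_i}(r(\pp,p)-r(\qq,p))^2/\li$ and recognize this as the Dirichlet-type quadratic form $f_p^T \lm f_p$, where $f_p(q):=r(p,q)$. By \remref{rem independence pdif} this quantity is independent of $p$, so it equals $\frac{1}{v}\sum_{p\in\vv{\ga}} f_p^T\lm f_p$. Substituting $f_p(q)=l_{pp}^+ + l_{qq}^+ - 2l_{pq}^+$ into $\sum_{q,s}l_{qs}f_p(q)f_p(s)$ and expanding the nine resulting cross terms, I would repeatedly apply (a) the double-centering of $\lm$ (\exref{ex disc0}), which annihilates every term in which a full row- or column-sum of $\lm$ factors out, and (b) \lemref{lem disc1} in the contracted form $\sum_s l_{qs} l_{sp}^+ = \delta_{qp} - 1/v$ to evaluate the terms that still couple $q$ and $s$. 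The surviving contributions assemble into $\sum_{q,s\in\vv{\ga}} l_{qs}l_{qq}^+l_{ss}^+ + (4/v)\,\text{trace}(\plm)$, where the trace enters when the $\delta_{qp}-1/v$ output is summed over $p$ and $\sum_q l_{qq}^+ = \text{trace}(\plm)$ is used. Multiplying by $\frac{1}{4}$ then produces the remaining two summands of the theorem.

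The main obstacle will be the clean bookkeeping of that nine-term expansion: it is easy to miscount which terms vanish by double-centering and which collapse through \lemref{lem disc1}, and one must carefully track the $\text{trace}(\plm)$ contribution through the $\delta_{qp}-1/v$ substitution and the subsequent average over $p$. A pleasant byproduct of the calculation is that $f_p^T \lm f_p$ falls out independent of $p$ intrinsically, giving a matrix-theoretic re-derivation of \remref{rem independence pdif}.
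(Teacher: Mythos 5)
Your proposal does not address the statement at hand. The statement to be proved is the matrix identity $\lm \plm = \plm \lm = \idm - \frac{1}{v}\jm$; what you have outlined instead is a derivation of the tau constant formula of \thmref{thm disc2}, starting from \propref{proptau} and translating its two summands into the entries of $\lm$ and $\plm$. Worse, your argument explicitly \emph{invokes} \lemref{lem disc1} (``in the contracted form $\sum_s l_{qs} l_{sp}^+ = \delta_{qp} - 1/v$'') as one of its two main tools, so read as a proof of that lemma it is circular. Nothing in your text establishes anything about the product $\lm\plm$. (For the record, the paper itself does not prove this lemma either; it cites it from Klein--Randi\'c, Equation 2.9.)

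A correct argument is short and purely linear-algebraic. By the Moore--Penrose conditions, $\lm\plm$ is symmetric and idempotent ($(\lm\plm)(\lm\plm) = (\lm\plm\lm)\plm = \lm\plm$), hence it is the orthogonal projector onto the column space of $\lm$; likewise $\plm\lm$ is the orthogonal projector onto the row space of $\lm$. Since $\lm$ is symmetric of rank $v-1$ with null space spanned by $\ay = [1,\dots,1]^T$, both of these spaces equal the orthogonal complement of $\ay$, and the orthogonal projector onto $\ay^\perp$ is
\begin{equation*}
\idm - \frac{1}{\ay^T\ay}\,\ay\ay^T = \idm - \frac{1}{v}\jm .
\end{equation*}
This gives $\lm\plm = \plm\lm = \idm - \frac{1}{v}\jm$ as claimed. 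If you intended your write-up as a proof of \thmref{thm disc2}, it should be resubmitted against that statement, where its overall strategy (splitting via \propref{proptau} and substituting $r(p,q) = l_{pp}^+ - 2l_{pq}^+ + l_{qq}^+$) does match the paper's route through \lemref{lem discr1} and \thmref{thm disc1}.
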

\begin{corollary}\label{cor disc1}
Let $\ga$ be a metrized graph and let $\lm$ be the corresponding discrete
Laplacian matrix of size $v \times v$. Then for any $p$, $q$ $\in
\vv{\ga}$,
$$\sum_{s \in \vv{\ga}}
l_{ps}^+ l_{sq}=\begin{cases} -\frac{1}{v} & \text{ if $p \not =q$}\\
\frac{v-1}{v} & \text{ if $p=q$ } \end{cases}.$$
\end{corollary}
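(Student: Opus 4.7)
The proposal is to read off the claimed identity as the entrywise form of \lemref{lem disc1}. That lemma states $\plm \lm = \idm - \frac{1}{v}\jm$ as a matrix identity, so I would simply expand the $(p,q)$ entry of both sides.

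First, I would fix vertices $p,\, q \in \vv{\ga}$ and observe that, by the definition of matrix multiplication, the $(p,q)$ entry of the product $\plm \lm$ is exactly
\begin{equation*}
(\plm \lm)_{pq} = \sum_{s \in \vv{\ga}} l_{ps}^+\, l_{sq},
\end{equation*}
which is the left-hand side of the claimed identity.

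Next, I would compute the $(p,q)$ entry of the right-hand side of \lemref{lem disc1}. Since $\idm$ has entry $1$ on the diagonal and $0$ off it, and $\jm$ has every entry equal to $1$, we get
\begin{equation*}
\Big(\idm - \tfrac{1}{v}\jm\Big)_{pq}
=
\begin{cases}
1 - \tfrac{1}{v} = \tfrac{v-1}{v}, & p = q,\\[2pt]
0 - \tfrac{1}{v} = -\tfrac{1}{v}, & p \neq q.
\end{cases}
\end{equation*}
Equating the two expressions yields the formula in the statement.

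There is essentially no obstacle: the only thing being used is the matrix identity already supplied by \lemref{lem disc1} and the concrete entries of $\idm$ and $\jm$. Symmetry of $\plm$ (\remref{rem doubcent1}) would make the computation with $\lm \plm$ give the same answer, so the ordering of the product in the sum is immaterial. The whole argument is one line of bookkeeping after invoking the previous lemma.
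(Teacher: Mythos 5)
Your proposal is correct and is exactly the argument the paper intends: \corref{cor disc1} is just the $(p,q)$-entrywise reading of the matrix identity $\plm\lm = \idm - \frac{1}{v}\jm$ from \lemref{lem disc1}, which is why the paper states it without a written proof. Nothing is missing.
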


See \cite{C-S}, \cite[ch 10]{SB}, \cite{RB1} and \cite{RM} for more
information about $\lm$ and $\plm$.

\section{The discrete Laplacian, the resistance function, and the tau constant}
\label{sec taudisc} \vskip .1 in


In this section, we will obtain a formula (see \thmref{thm disc2})
for the tau constant in terms of the entries of $\lm$ and $\plm$.
Our main tools will be a remarkable relation between the resistance
and the pseudo inverse $\plm$ (\lemref{lem disc2} below), properties
of $\lm$ and $\plm$ given in \secref{sec discrete laplacian}, the
results from \secref{sec introduction} concerning metrized graphs, and the circuit reduction
theory.
\begin{lemma} \cite{RB2}, \cite{RB3}, \cite[Theorem A]{D-M} \label{lem disc2}
Suppose $\ga$ is a graph with the discrete Laplacian $\lm$ and the
resistance function $r(x,y)$. Let $\hm$ be a generalized inverse
of $\lm$ (i.e., $\lm \hm \lm = \lm$). Then we have
$$r(p,q)=\hm_{pp}-\hm_{pq}-\hm_{qp}+\hm_{qq}, \quad \text{for any $p$, $q$ $\in \vv{\ga}$}.$$
In particular, for the pseudo inverse $\plm$ we have
$$r(p,q)=l_{pp}^+-2l_{pq}^+ + l_{qq}^+, \quad \text{for any $p$, $q$ $\in \vv{\ga}$}.$$
\end{lemma}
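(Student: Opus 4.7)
The plan is to realize the resistance as the potential difference in a discrete current-flow problem governed by $\lm$, and then express that potential difference as a quadratic form in the generalized inverse $\hm$.

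First I would fix an optimal vertex set so that $\ga$ is a simple network with conductances $1/L_k$ on edges, and recall that for $p,q\in\vv{\ga}$ the value $r(p,q)$ coming from the continuous circuit on $\ga$ agrees with the effective resistance of this discrete weighted network between $p$ and $q$ (this follows from series/parallel reductions: once a unit current enters at $p$ and leaves at $q$, no current flows along the interior parametrizations of edges, so the restriction of the harmonic potential to $\vv{\ga}$ solves Kirchhoff's node equations). Writing $e_p,e_q$ for the standard basis vectors of $\RR^v$, the node equations say that the potential vector $\phi\in\RR^v$ satisfies
\begin{equation*}
\lm\phi = e_p - e_q,
\end{equation*}
and by Ohm's law $r(p,q)=\phi_p-\phi_q=(e_p-e_q)^T\phi$.

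The core step is then to check that $\phi := \hm(e_p-e_q)$ is a valid solution. Because $\lm$ is symmetric, its range is the orthogonal complement of its null space, which by Example~\ref{ex disc0} (and the fact that $\ga$ is connected) is spanned by $\ay=[1,1,\dots,1]^T$. Since $\ay^T(e_p-e_q)=0$, the vector $e_p-e_q$ lies in the range of $\lm$, so we may write $e_p-e_q=\lm u$ for some $u\in\RR^v$. The defining property $\lm\hm\lm=\lm$ then gives
\begin{equation*}
\lm\hm(e_p-e_q) = \lm\hm\lm u = \lm u = e_p - e_q,
\end{equation*}
so $\phi=\hm(e_p-e_q)$ indeed satisfies the node equations. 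Substituting into $r(p,q)=(e_p-e_q)^T\phi$ yields
\begin{equation*}
r(p,q) = (e_p-e_q)^T\hm(e_p-e_q) = \hm_{pp}-\hm_{pq}-\hm_{qp}+\hm_{qq},
\end{equation*}
which is the first identity. For the pseudo inverse $\plm$, symmetry (already listed as a property of $\plm$ in Section~\ref{sec discrete laplacian}) gives $l_{pq}^+=l_{qp}^+$, and the formula collapses to $r(p,q)=l_{pp}^+-2l_{pq}^++l_{qq}^+$.

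The main obstacle is conceptual rather than computational: one must be comfortable identifying the continuous resistance function $r(p,q)$ on the metrized graph $\ga$ with the effective resistance of the associated discrete weighted network on $\vv{\ga}$. Once this identification is in place (and once we observe that $e_p-e_q$ is in the range of the symmetric matrix $\lm$), the rest is a one-line manipulation using $\lm\hm\lm=\lm$. A minor subtlety worth spelling out is that although $\phi$ itself is only determined up to adding a multiple of $\ay$, the quantity $(e_p-e_q)^T\phi$ is invariant under such shifts since $(e_p-e_q)^T\ay=0$, so the answer does not depend on the particular choice of generalized inverse $\hm$.
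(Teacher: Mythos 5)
Your proof is correct. There is nothing in the paper to compare it against: \lemref{lem disc2} is imported from \cite{RB2}, \cite{RB3} and \cite[Theorem A]{D-M} without an internal proof, and your argument is essentially the standard one given in those references. The key points are all in place --- identifying $r(p,q)$ with the effective resistance of the discrete weighted network on an optimal vertex set, noting that $e_p-e_q$ is orthogonal to the all-ones vector and hence lies in the range of the symmetric matrix $\lm$, using $\lm\hm\lm=\lm$ to see that $\hm(e_p-e_q)$ solves the node equation, and observing that the answer $(e_p-e_q)^T\phi$ is unchanged by adding multiples of the all-ones vector to $\phi$, so it is independent of the choice of generalized inverse. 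One phrasing slip worth fixing: current certainly does flow along the interiors of edges; what you mean is that no current is \emph{injected} at interior points, so the potential is affine on each edge and the current through an edge equals the endpoint potential difference divided by its length, which is exactly what reduces the continuous circuit on $\ga$ to Kirchhoff's equations on $\vv{\ga}$.
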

\lemref{lem disc2} shows that the pseudo inverses can be used to compute the resistance $r(p,q)$ between any $p$, $q$ in $\ga$. Namely, we choose an optimal vertex set $\vv{\ga}$
containing $p$ and $q$. Then we compute the corresponding pseudo inverse, and apply \lemref{lem disc2}. Similarly, the following lemma shows that the pseudo inverses can be used to compute the voltage $j_p(q,s)$ for any $p$, $q$ and $s$ in $\ga$.
\begin{lemma}\cite[Lemma 3.5]{C4}\label{lem voltage1}
Let $\ga$ be a graph with the discrete Laplacian $\lm$ and the voltage
function $j_x(y,z)$. Then for any $p$, $q$, $s$ in $\vv{\ga}$,
$$j_p(q,s)=l_{pp}^+ - l_{pq}^+ -l_{ps}^+ +l_{qs}^+.$$
\end{lemma}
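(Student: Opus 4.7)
The plan is to reduce the identity to \lemref{lem disc2} via the standard voltage-resistance relation
\[
j_p(q,s) \;=\; \tfrac{1}{2}\bigl(r(p,q)+r(p,s)-r(q,s)\bigr),
\]
which holds on any resistive network and in particular on a metrized graph. Once this is in hand, the lemma follows by plugging in the three substitutions $r(p,q)=l_{pp}^+-2l_{pq}^++l_{qq}^+$, $r(p,s)=l_{pp}^+-2l_{ps}^++l_{ss}^+$, and $r(q,s)=l_{qq}^+-2l_{qs}^++l_{ss}^+$ supplied by \lemref{lem disc2}; the $l_{qq}^+$ and $l_{ss}^+$ terms cancel, and a factor of $2$ divides out to give exactly $l_{pp}^+-l_{pq}^+-l_{ps}^++l_{qs}^+$.

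The only step requiring genuine justification is the voltage-resistance identity itself, and the main obstacle is to make sure we are using the right sign conventions from the definition in \secref{sec tau constant}. I would justify the identity by checking that the right-hand side $F(p,q,s):=\tfrac12(r(p,q)+r(p,s)-r(q,s))$ satisfies the three characterizing properties of $j_p(q,s)$ recalled in \secref{sec tau constant}: namely $F(p,p,s)=0$ (indeed the first two terms give $r(p,s)$ since $r(p,p)=0$, and the third term is $-r(p,s)$), $F(p,s,s)=r(p,s)$ (matching $j_p(s,s)=r(p,s)$), and symmetry $F(p,q,s)=F(p,s,q)$. By uniqueness of the solution to the Laplace equation with these boundary conditions, $F(p,q,s)=j_p(q,s)$. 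Alternatively, this identity is standard and can simply be cited from \cite{CR} or \cite{BRh}.

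Because $p,q,s$ are required to lie in the chosen vertex set $\vv{\ga}$, the entries $l_{ab}^+$ on the right-hand side are well defined with respect to this vertex set; by \remref{remvalence} the resistance function is unchanged under enlarging the vertex set, so the final formula is internally consistent. This completes the plan: apply the voltage-resistance identity, substitute \lemref{lem disc2} three times, and simplify algebraically.
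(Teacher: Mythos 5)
The paper offers no proof of this lemma at all --- it is quoted verbatim from \cite{C4} --- so there is no in-paper argument to match yours against. Your overall route is sound: the identity $j_p(q,s)=\tfrac12\bigl(r(p,q)+r(p,s)-r(q,s)\bigr)$ is indeed standard (it appears in \cite{BF} and is implicit in \cite{CR}), and your algebra is right: substituting $r(a,b)=l_{aa}^+-2l_{ab}^++l_{bb}^+$ three times cancels the $l_{qq}^+$ and $l_{ss}^+$ terms and the factor of $2$, leaving exactly $l_{pp}^+-l_{pq}^+-l_{ps}^++l_{qs}^+$. If you simply cite the voltage--resistance identity, the proof is complete.

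The one genuine soft spot is the justification you sketch for that identity. The three properties you verify --- $F(p,p,s)=0$, $F(p,s,s)=r(p,s)$, and symmetry in $q,s$ --- do \emph{not} characterize $j_p(\cdot,\cdot)$: for example $F(p,q,s)+r(p,q)\,r(p,s)\,r(q,s)$ is symmetric in $q,s$ and satisfies both boundary evaluations, yet is not the voltage function. What actually pins $j_p(q,s)$ down is the Laplace equation $\Delta_q\, j_p(q,s)=\delta_s(q)-\delta_p(q)$ together with $j_p(p,s)=0$, and you never check that $F$ has the right Laplacian in $q$; doing so honestly requires computing $\Delta_q r(q,\cdot)$, which is no easier than the identity you are trying to prove. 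A cleaner self-contained repair, using only tools already in this paper, bypasses the resistance function entirely: by \lemref{lem disc1}, the vector $u$ with entries $u_x=l_{xs}^+-l_{xp}^+$ satisfies $\lm u = \bigl(\idm-\tfrac1v\jm\bigr)(\,\cdot\,)$ evaluated so that $(\lm u)_x=\delta_{xs}-\delta_{xp}$ (the two $\tfrac1v\jm$ contributions cancel), i.e.\ $u$ is a vertex potential for unit current entering at $s$ and exiting at $p$. By the physical description of the voltage function recalled in \secref{sec tau constant}, $j_p(q,s)$ is the voltage difference between $q$ and $p$ for exactly this current flow, so $j_p(q,s)=u_q-u_p=l_{pp}^+-l_{pq}^+-l_{ps}^++l_{qs}^+$ by the symmetry of $\plm$ (\remref{rem doubcent1}). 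Either fix --- citing the standard identity outright, or the potential argument just given --- closes the gap.
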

\begin{corollary}\label{cor lpq comp}
Let $\ga$ be a graph with the discrete Laplacian matrix $\lm$ having the pseudo inverse $\plm$.
Then for any $p, \, q \in \vv{\ga}$, we have
$ l_{pp}^+ \geq l_{pq}^+ .$
\end{corollary}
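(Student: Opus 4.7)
The plan is to combine Lemma \ref{lem voltage1}, which expresses the voltage function in terms of entries of $\plm$, with two facts already established in the paper: the non-negativity of the voltage function ($j_z(x,y) \geq 0$ for all $x,y,z \in \ga$), and the fact that $\plm$ is doubly centered (Remark \ref{rem doubcent1}), so that $\sum_{s \in \vv{\ga}} l_{ps}^+ = 0$ for each $p \in \vv{\ga}$.

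Starting from Lemma \ref{lem voltage1}, for any $p,q,s \in \vv{\ga}$ we have
$$
j_p(q,s) = l_{pp}^+ - l_{pq}^+ - l_{ps}^+ + l_{qs}^+.
$$
The idea is to sum this identity over $s \in \vv{\ga}$, since the dependence on $s$ lives only in the two ``centered'' terms $l_{ps}^+$ and $l_{qs}^+$, which will vanish in aggregate. Explicitly,
$$
\sum_{s \in \vv{\ga}} j_p(q,s) = v\,l_{pp}^+ - v\,l_{pq}^+ - \sum_{s \in \vv{\ga}} l_{ps}^+ + \sum_{s \in \vv{\ga}} l_{qs}^+ = v\bigl(l_{pp}^+ - l_{pq}^+\bigr),
$$
using Remark \ref{rem doubcent1} in the last step. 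Since each summand on the left is non-negative (by the interpretation of $j_z(x,y)$ as a voltage), the right-hand side is non-negative, and dividing by $v > 0$ gives $l_{pp}^+ \geq l_{pq}^+$, as desired.

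There is no real obstacle here; the only thing to be careful about is citing the non-negativity of $j_z(x,y)$ correctly (it is stated in \secref{sec tau constant}, with references to \cite{CR}, \cite{BRh}, \cite{Zh1}) and making sure the double-centering of $\plm$ applies here (it does, by Remark \ref{rem doubcent1}). The argument is a short three-line computation once Lemma \ref{lem voltage1} is in hand.
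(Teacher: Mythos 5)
Your proof is correct and is essentially identical to the paper's own argument: both sum the identity of Lemma~\ref{lem voltage1} over $s \in \vv{\ga}$, invoke Remark~\ref{rem doubcent1} to obtain $\sum_{s \in \vv{\ga}} j_p(q,s) = v\,(l_{pp}^+ - l_{pq}^+)$, and conclude from the non-negativity of the voltage function. No differences worth noting.
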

\begin{proof} By \remref{rem doubcent1} and \lemref{lem voltage1},
$\sum_{s \in \vv{\ga}} j_p(q,s) = v \cdot (l_{pp}^+ - l_{pq}^+)$ for any $p$ and $q$ in $\vv{\ga}$.
Thus the result follows from the fact that $j_p(q,s) \geq 0$ for any $p,
\, q, \, s \in \ga$.
\end{proof}

Recall that we use $\li$ for the length of edge $e_i \in \ee{\ga}$ and $R_i$ for the resistance between the endpoints of $e_i$ in the graph $\Gamma-e_i$.
An important term for computations concerning $\tg$ is expressed in terms of $\lm$ and
$\plm$ by the following lemma:
\begin{lemma}\label{lem disc8}
Let $\lm$ be the discrete Laplacian matrix of size $v \times v$ for a
graph $\ga$. Let $\pp$ and $\qq$ be the end points of edge $e_i$  for
any given $e_i \in \ee{\ga}$. Then
$$\sum_{e_i \in \ee{\ga}} \frac{\li \ri^2}{(\li + \ri)^2} =
\frac{4(v-1)}{v} \tr(\plm)- \sum_{p, \, q \in \vv{\ga}} \lpq \plp
\plq -2 \sum_{p, \, q \in \vv{\ga} } \lpq \big(l_{pq}^+ \big)^2.$$
\end{lemma}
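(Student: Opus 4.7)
The plan is to rewrite the left-hand side as a resistance sum and then expand using \lemref{lem disc2}, letting the doubly-centered property of $\lm$ kill most terms.

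First, I would use circuit reduction (parallel/series) on the edge $e_i$ together with the resistance $\ri$ across its endpoints in $\ga - e_i$: when both $\li$ and $\ri$ are finite,
\begin{equation*}
r(\pp, \qq) \;=\; \frac{\li \, \ri}{\li + \ri},
\end{equation*}
and in the bridge case $\ri = \infty$ one has $r(\pp, \qq) = \li$. In either case the summand $\frac{\li \ri^2}{(\li+\ri)^2}$ equals $\frac{r(\pp,\qq)^2}{\li}$ (the limit $\li$ as $\ri \to \infty$ matches). Since the vertex set is optimal, the edge $e_i$ is the unique edge joining $\pp$ and $\qq$, so $l_{\pp \qq} = -1/\li$, and therefore
\begin{equation*}
\sum_{e_i \in \ee{\ga}} \frac{\li \ri^2}{(\li+\ri)^2} \;=\; -\sum_{e_i \in \ee{\ga}} l_{\pp \qq}\, r(\pp, \qq)^2 \;=\; -\frac{1}{2}\sum_{p,\, q \in \vv{\ga}} l_{pq}\, r(p,q)^2,
\end{equation*}
where in the last step I extend the sum over all ordered pairs, picking up a factor $\tfrac12$ for the two orderings of each edge and using $l_{pq} = 0$ when $p, q$ are non-adjacent (and $r(p,p) = 0$ on the diagonal).

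Next, I would plug in the Bapat identity of \lemref{lem disc2}, $r(p, q) = l_{pp}^+ - 2 l_{pq}^+ + l_{qq}^+$, and square it out into the six pieces $(\plp)^2$, $(\plq)^2$, $2\plp\plq$, $4(l_{pq}^+)^2$, $-4\plp l_{pq}^+$, $-4\plq l_{pq}^+$. The two terms $\sum_{p,q} l_{pq} (\plp)^2$ and $\sum_{p,q} l_{pq} (\plq)^2$ vanish because $\lm$ is doubly centered (\exref{ex disc0}): fixing $p$ (resp.\ $q$) leaves $\sum_q l_{pq} = 0$. The remaining four terms then give, after the overall factor $-\tfrac12$:
\begin{equation*}
-\sum_{p,q} l_{pq}\, \plp \plq \;-\; 2\sum_{p,q} l_{pq}\, (l_{pq}^+)^2 \;+\; 2\sum_{p,q} l_{pq}\, \plp l_{pq}^+ \;+\; 2\sum_{p,q} l_{pq}\, \plq l_{pq}^+ .
\end{equation*}

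The last two cross terms are handled with \lemref{lem disc1}, which gives $\lm \plm = \idm - \tfrac{1}{v}\jm$. Using the symmetry of $\plm$ (\remref{rem doubcent1}), the diagonal entries yield $\sum_q l_{pq}\, l_{pq}^+ = \sum_q l_{pq}\, l_{qp}^+ = (\lm\plm)_{pp} = 1 - \tfrac{1}{v} = \tfrac{v-1}{v}$. Hence
\begin{equation*}
\sum_{p,q} l_{pq}\, \plp l_{pq}^+ \;=\; \sum_p \plp \Big(\sum_q l_{pq}\, l_{pq}^+\Big) \;=\; \frac{v-1}{v}\, \tr(\plm),
\end{equation*}
and the same value for the $\plq$ cross term by symmetry. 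Combining these gives the $\frac{4(v-1)}{v}\tr(\plm)$ on the right-hand side, completing the identity.

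The only real subtlety, and where I would slow down, is the bookkeeping step of passing from the edge sum to the ordered-pair sum: one must verify the optimal-vertex assumption (so each edge contributes exactly one pair with $l_{pq} = -1/\li$), account for the factor of two from ordering, and check the bridge case $\ri = \infty$ separately so the formula $\frac{\li \ri^2}{(\li+\ri)^2} = r(\pp,\qq)^2 / \li$ survives as a limit. Everything after that is linear algebra driven by the two defining properties $\lm\ay = \om$ and $\lm\plm = \idm - \tfrac{1}{v}\jm$.
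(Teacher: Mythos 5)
Your proposal is correct and follows essentially the same route as the paper: rewrite each summand as $r(\pp,\qq)^2/\li$, apply \lemref{lem disc2}, pass to the ordered vertex-pair sum with a factor of $\tfrac12$, and then kill or evaluate the expanded terms using the doubly-centered property and $\lm\plm=\idm-\tfrac1v\jm$ (the paper cites \corref{cor disc1} for the cross terms, which is the same fact). Your extra care with the bridge case $\ri=\infty$ and the edge-to-pair bookkeeping is a welcome refinement the paper leaves implicit.
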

\begin{proof}
First, we use \exref{ex disc0} to obtain
\begin{equation}\label{eqnlem disc6}
\begin{split}
\sum_{p, \, q \in \vv{\ga}} \lpq \big(\plp \big)^2 =\sum_{p \in
\vv{\ga}} \big(\plp \big)^2 \Big( \sum_{q \in \vv{\ga}} \lpq \Big)=0.
\end{split}
\end{equation}
Using \corref{cor disc1},
\begin{equation}\label{eqnlem disc5}
\begin{split}
\sum_{p, \, q \in \vv{\ga}} \lpq \plpq \plp = \frac{v-1}{v} \cdot \tr(\plm).
\end{split}
\end{equation}
Then
\begin{equation*}
\begin{split}
\sum_{e_i \in \ee{\ga}} \frac{\li \ri^2}{(\li + \ri)^2}
& = \sum_{e_i \in \ee{\ga}} \frac{1}{\li}\big(r(\pp,\qq) \big)^2, \quad \text{ since $r(\pp,\qq)=\frac{\li \ri}{\li+\ri}$.} \\
& = -\sum_{e_i \in \ee{\ga}} l_{\pp \qq}\big( l_{\pp \pp}^+ +l_{\qq
\qq}^+ -2l_{\pp,\qq}^+  \big)^2,
\quad \text{by \lemref{lem disc2}.} \\
& = -\frac{1}{2}\sum_{p, \, q \in \vv{\ga}} \lpq \big( l_{pp}^+
+l_{qq}^+ -2l_{pq}^+ \big)^2, \quad \text{as $l_{pq}=0$ if $p$, $q$ are not adjacent.} \\
& = -\frac{1}{2}\sum_{p, \, q \in \vv{\ga}} \lpq \big( l_{pp}^+
+l_{qq}^+ \big)^2 + 2\sum_{p, \, q \in \vv{\ga}} \Big( \lpq (l_{pp}^+
+l_{qq}^+)l_{pq}^+ -\lpq \big( l_{pq}^+ \big)^2 \Big)
\\
& = -\sum_{p, \, q \in \vv{\ga}} \lpq l_{pp}^+ l_{qq}^+ +
\sum_{p, \, q \in \vv{\ga}} \Big( 4 \lpq l_{pp}^+ l_{pq}^+ -2 \lpq \big( l_{pq}^+ \big)^2 \Big)
, \quad \text{by \eqnref{eqnlem disc6}.} \\
\end{split}
\end{equation*}
Thus, the result follows from \eqnref{eqnlem disc5}.
\end{proof}

Next, we will have several lemmas concerning identities involving the entries of $\lm$ and $\plm$.
\begin{lemma}\label{lem disc9}
Let $\lm$ be the discrete Laplacian matrix of a
graph $\ga$. Then for any $p \in \vv{\ga}$,
$$\sum_{q, \, s \in \vv{\ga}} l_{qs} \big(l_{qq}^+ -l_{ss}^+ \big) \big(l_{qp}^+ -l_{sp}^+ \big) =
-2\sum_{q, \, s \in \vv{\ga}} l_{qs} l_{qq}^+ l_{sp}^+.$$
\end{lemma}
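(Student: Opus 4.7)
The plan is a direct expansion and symmetry argument, using only that $\lm$ is symmetric and doubly centered (\exref{ex disc0}). No deeper input should be needed; the statement is a purely algebraic identity about the entries of $\lm$ and $\plm$.

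First I would multiply out the factors on the left-hand side:
\begin{equation*}
\begin{split}
\sum_{q,s \in \vv{\ga}} l_{qs}\bigl(l_{qq}^+ - l_{ss}^+\bigr)\bigl(l_{qp}^+ - l_{sp}^+\bigr)
= &\sum_{q,s} l_{qs}\, l_{qq}^+ l_{qp}^+
- \sum_{q,s} l_{qs}\, l_{qq}^+ l_{sp}^+ \\
&- \sum_{q,s} l_{qs}\, l_{ss}^+ l_{qp}^+
+ \sum_{q,s} l_{qs}\, l_{ss}^+ l_{sp}^+ .
\end{split}
\end{equation*}

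Next I would exploit the symmetry $l_{qs} = l_{sq}$ from \exref{ex disc0}. Relabeling the dummy indices $q \leftrightarrow s$ in the fourth sum turns it into the first sum, and the same relabeling turns the third sum into the second. So the right-hand side of the display above collapses to
\[
2 \sum_{q,s} l_{qs}\, l_{qq}^+ l_{qp}^+ \;-\; 2 \sum_{q,s} l_{qs}\, l_{qq}^+ l_{sp}^+ .
\]

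Finally, in the first of these two sums the factors $l_{qq}^+ l_{qp}^+$ do not depend on $s$, so I can factor the $s$-sum out:
\[
\sum_{q,s} l_{qs}\, l_{qq}^+ l_{qp}^+
= \sum_{q} l_{qq}^+ l_{qp}^+ \Bigl(\sum_{s \in \vv{\ga}} l_{qs}\Bigr) = 0,
\]
since $\lm$ is doubly centered. The remaining term is $-2\sum_{q,s} l_{qs}\, l_{qq}^+ l_{sp}^+$, which is exactly the desired right-hand side.

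There is no real obstacle here; the only thing one has to notice is the right symmetrization: the two cross terms collapse to a single term via $q \leftrightarrow s$, and the two ``diagonal'' terms collapse to a single term that vanishes because every row of $\lm$ sums to zero. Note that $p$ is just a fixed spectator index throughout, so it plays no special role in the argument.
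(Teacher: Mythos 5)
Your proof is correct and follows essentially the same route as the paper's: expand the product into four sums, use the symmetry $l_{qs}=l_{sq}$ to pair them up, and kill the diagonal term via $\sum_{s}l_{qs}=0$. The paper merely organizes the bookkeeping slightly differently (vanishing of the first and fourth terms first, then identifying the two cross terms), but the ingredients and the argument are identical.
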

\begin{proof}
By using \exref{ex disc0}, for any $p \in \vv{\ga}$
\begin{equation}\label{eqnlem disc9a}
\begin{split}
& \sum_{q, \, s \in \vv{\ga}} l_{qs} l_{qq}^+ l_{qp}^+ = \sum_{q \in
\vv{\ga}} l_{qq}^+ l_{qp}^+ \Big( \sum_{ s \in \vv{\ga}} l_{qs}
\Big) = 0.
\end{split}
\end{equation}
Using \exref{ex disc0} and \eqnref{eqnlem disc9a} for the second equality,
\begin{equation*}
\begin{split}
\sum_{q, \, s \in \vv{\ga}} l_{qs} \big(l_{qq}^+ -l_{ss}^+ \big)
\big(l_{qp}^+ -l_{sp}^+ \big)
&= \sum_{q, \, s \in \vv{\ga}} \Big( l_{qs} l_{qq}^+ l_{qp}^+ -l_{qs} l_{qq}^+
l_{sp}^+ -l_{qs} l_{ss}^+ l_{qp}^+ + l_{qs} l_{ss}^+ l_{sp}^+\Big)
\\ & = -\sum_{q, \, s \in \vv{\ga}} \big( l_{qs} l_{qq}^+ l_{sp}^+ + l_{qs} l_{ss}^+ l_{qp}^+ \big).
\end{split}
\end{equation*}
This is equivalent to what we wanted.
\end{proof}
\begin{lemma}\label{lem disc9r}
Let $\lm$ be the discrete Laplacian matrix of size $v \times v$ for a
graph $\ga$, and let $\pp$, $\qq$ be the end points of $e_i \in
\ee{\ga}$. Then for any $p \in \vv{\ga}$,
\begin{equation*}
\begin{split}
l_{pp}^+ & = \frac{1}{v}tr(\plm)+ \sum_{e_i \in \ee{\ga}}
\frac{\li}{\li+\ri} \big( l_{p \pp}^+ + l_{p \qq}^+ \big) - \sum_{q
\in \vv{\ga}} \va(q) l_{pq}^+, \\
l_{pp}^+ & = \frac{1}{v}tr(\plm)- \sum_{e_i \in \ee{\ga}} \frac{\ri}{\li+\ri}
\big( l_{p \pp}^+ + l_{p \qq}^+ \big).
\end{split}
\end{equation*}
\end{lemma}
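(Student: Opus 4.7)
The plan is to exploit the $p$-independence of $\tg$. By \lemref{lemtau1} and \thmref{thmCanonicalMeasureFormula} (equivalently, \propref{prop tau canonical}), for any vertex $p \in \vv{\ga}$,
\[
2\tg \;=\; \sum_{q \in \vv{\ga}} \bigl(1 - \tfrac{1}{2}\va(q)\bigr) r(p,q) \;+\; \sum_{e_i \in \ee{\ga}} \frac{1}{\li + \ri} \int_0^{\li} r(p,x)\,dx.
\]
I will rewrite the right-hand side entirely in terms of entries of $\plm$ and then use the fact that the left-hand side does not depend on $p$.

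The only concrete calculation is the edge integral. On each $e_i$ the function $r(p,x)$ is the quadratic $\frac{(x+R_{a_i,p})(\li-x+R_{b_i,p})}{\li+\ri}+R_{c_i,p}$ from just before \lemref{lem crtterm}. Using $R_{a_i,p}+R_{b_i,p}=\ri$, a routine integration gives
\[
\int_0^{\li} r(p,x)\,dx \;=\; \tfrac{\li}{2}\bigl(r(p,\pp)+r(p,\qq)\bigr) \;+\; \frac{\li^3}{6(\li+\ri)}.
\]

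Substituting this back, replacing every $r(p,q)$ by $l_{pp}^+ - 2 l_{pq}^+ + l_{qq}^+$ via \lemref{lem disc2}, and using the identities $\sum_q (1 - \tfrac{1}{2}\va(q)) = v-e$ (from $\sum_q \va(q) = 2e$), $\sum_{e_i}\frac{\li}{\li+\ri} = e - v + 1$ from \eqnref{eqn genus}, the edge-to-vertex reorganization $\sum_q \va(q) l_{pq}^+ = \sum_{e_i}(l_{p\pp}^+ + l_{p\qq}^+)$, and the doubly-centered property $\sum_q l_{pq}^+ = 0$ (\remref{rem doubcent1}), I expect the $p$-dependent coefficients of $l_{pp}^+$ from the two groups of terms to combine as $(v-e)+(e-v+1)=1$, so that after collection the expression splits as
\[
2\tg \;=\; \Bigl[\,l_{pp}^+ \;+\; \sum_{e_i \in \ee{\ga}}\frac{\ri}{\li+\ri}\bigl(l_{p\pp}^+ + l_{p\qq}^+\bigr)\Bigr] \;+\; C,
\]
where $C$ depends only on $tr(\plm)$, $\sum_{e_i}\frac{\li}{\li+\ri}(l_{\pp\pp}^+ + l_{\qq\qq}^+)$, and $\sum_{e_i}\frac{\li^3}{(\li+\ri)^2}$, hence is independent of $p$.

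Since $2\tg$ does not depend on $p$, the bracketed quantity is constant on $\vv{\ga}$. Averaging it over $p \in \vv{\ga}$ and applying $\sum_p l_{pp}^+ = tr(\plm)$ together with the doubly-centered identity $\sum_p l_{p\qq}^+ = 0$ (\remref{rem doubcent1}) evaluates the constant as $\tfrac{1}{v}tr(\plm)$; this is the second identity of the lemma. The first identity then follows by writing $\tfrac{\ri}{\li+\ri} = 1 - \tfrac{\li}{\li+\ri}$ and again using $\sum_{e_i}(l_{p\pp}^+ + l_{p\qq}^+) = \sum_q \va(q) l_{pq}^+$. The main obstacle is bookkeeping in the collection step: one must verify that, after all substitutions, every $p$-dependent term on the right-hand side is accounted for in the bracketed combination, with all other contributions $p$-independent; this is precisely where the genus identity \eqnref{eqn genus} and the vertex-valence sum conspire.
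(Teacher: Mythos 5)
Your proof is correct and rests on the same engine as the paper's: substitute $r(p,q)=l_{pp}^+-2l_{pq}^++l_{qq}^+$ into a $p$-independent tau expression, collect the $p$-dependent terms using the genus identity (\ref{eqn genus}) and the valence sum, then average over $p\in\vv{\ga}$ and kill the off-diagonal sums with the doubly centered property of $\plm$. The only cosmetic difference is that you anchor the argument at \propref{prop tau canonical} and evaluate the edge integral explicitly, whereas the paper anchors it at \lemref{lem crtterm} together with \remref{rem independence pdif}, which package the same computation via integration by parts.
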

\begin{proof}
We use \lemref{lem crtterm} for the first equality below and \lemref{lem disc2} for the second equality below:
\begin{equation*}\label{eqn discr}
\begin{split}
& \sum_{e_i \in \ee{\ga}}\frac{\li(R_{a_{i},p}-R_{b_{i},p})^2}{(\li+\ri)^2} = \sum_{e_i
\in \ee{\ga}}\frac{\li}{\li + \ri} \big(r(\pp,p)+r(\qq,p)\big) -
\sum_{q \in \vv{\ga}}(\va(q)-2)r(p,q)
\\
&= \sum_{e_i \in \ee{\ga}}\frac{\li}{\li + \ri} \big( l_{\pp
\pp}^+ + l_{\qq \qq}^+ -2(l_{p \pp}^+ +l_{p \qq}^+ -l_{pp}^+) \big)
- \sum_{q \in \vv{\ga}}(\va(q)-2) \big(l_{qq}^+ -2l_{pq}^+ +l_{pp}^+ \big).
\\
&= \sum_{e_i \in \ee{\ga}}\frac{\li}{\li + \ri} \big( l_{\pp
\pp}^+ + l_{\qq \qq}^+ -2(l_{p \pp}^+ +l_{p \qq}^+ ) \big) +
2l_{pp}^+ - \sum_{q \in \vv{\ga}}(\va(q)-2)l_{qq}^+ +2 \sum_{q \in
\vv{\ga}}\va(q)l_{pq}^+.
\\ & \text{by \eqnref{eqn genus} and the fact that $\sum_{q \in \vv{\ga}}(\va(q)-2)=2e-2v$.}
\end{split}
\end{equation*}
Since $\sum_{e_i \in \ee{\ga}}\frac{\li(R_{a_{i},p}-R_{b_{i},p})^2}{(\li+\ri)^2}=\frac{1}{v}\sum_{p
\in \vv{\ga}} \sum_{e_i \in \ee{\ga}}\frac{\li(R_{a_{i},p}-R_{b_{i},p})^2}{(\li+\ri)^2}$ by
\remref{rem independence pdif}, the first equality in the lemma follows if we sum above equality over all
$p \in \vv{\ga}$ and apply \exref{ex disc0}.
Then the second equality in the lemma follows from the fact that
$\sum_{q \in \vv{\ga}}\va(q)l_{pq}^+ = \sum_{e_i \in \ee{\ga}} \big(l_{p \pp}^+ +l_{p \qq}^+ \big).$
\end{proof}
\begin{lemma}\label{lem disc9x}
Let $\lm$ be the discrete Laplacian matrix of a
graph $\ga$. Let $\pp$ and $\qq$ be end points of $e_i \in
\ee{\ga}$. Then
$$\sum_{q, \, s  \in \vv{\ga}} l_{qs} l_{qq}^+  l_{ss}^+
= -\frac{1}{2}\sum_{q, \, s  \in \vv{\ga}} l_{qs} \big( l_{qq}^+ -
l_{ss}^+ \big)^2=\sum_{e_i  \in \ee{\ga}} \frac{1}{\li}\big( l_{\pp
\pp}^+ - l_{\qq \qq}^+ \big)^2 \geq 0.$$
\end{lemma}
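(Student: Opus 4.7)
The plan is to prove the two equalities separately, relying only on the doubly-centered property of $\lm$ (Example~\ref{ex disc0}) and the definition of the entries of $\lm$; the inequality at the end is then immediate.

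For the first equality, I would expand the square on the middle expression as
\begin{equation*}
-\tfrac{1}{2}\sum_{q,s \in \vv{\ga}} l_{qs}\bigl((l_{qq}^+)^2 - 2 l_{qq}^+ l_{ss}^+ + (l_{ss}^+)^2\bigr).
\end{equation*}
The two outer sums vanish by the doubly-centered property. Indeed,
\begin{equation*}
\sum_{q,s} l_{qs}(l_{qq}^+)^2 = \sum_{q}(l_{qq}^+)^2 \sum_{s} l_{qs} = 0,
\end{equation*}
exactly as in \eqnref{eqnlem disc6}, and by symmetry the term with $(l_{ss}^+)^2$ vanishes as well. What remains is $\sum_{q,s} l_{qs} l_{qq}^+ l_{ss}^+$, which is the left-hand side.

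For the second equality, I would use that $l_{qs} = 0$ whenever $q \neq s$ are not adjacent and that the diagonal terms ($q=s$) contribute $0$ because of the factor $(l_{qq}^+ - l_{ss}^+)^2$. Thus the double sum reduces to a sum over ordered pairs of adjacent distinct vertices, in which each edge $e_i \in \ee{\ga}$ with endpoints $\pp$, $\qq$ contributes twice (once as $(\pp,\qq)$ and once as $(\qq,\pp)$); using $l_{\pp\qq} = -1/\li$ this gives
\begin{equation*}
-\tfrac{1}{2}\sum_{q,s \in \vv{\ga}} l_{qs}(l_{qq}^+ - l_{ss}^+)^2
= -\sum_{e_i \in \ee{\ga}} l_{\pp \qq}(l_{\pp\pp}^+ - l_{\qq\qq}^+)^2
= \sum_{e_i \in \ee{\ga}} \tfrac{1}{\li}(l_{\pp\pp}^+ - l_{\qq\qq}^+)^2.
\end{equation*}
Finally, non-negativity is obvious since each summand on the right is of the form $(\text{square})/\li \geq 0$.

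There is no genuine obstacle here — the argument is a one-line expansion followed by an edge-versus-pair bookkeeping step. The only thing to be careful about is the factor of $2$ when converting the symmetric sum over $\vv{\ga} \times \vv{\ga}$ into a sum over edges, and the observation that the doubly-centered identity kills precisely the two ``unwanted'' pieces after expanding the square.
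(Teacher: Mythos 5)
Your proof is correct and follows essentially the same route as the paper: expand the square, kill the $(l_{qq}^+)^2$ and $(l_{ss}^+)^2$ terms via the doubly-centered property (the paper cites its Equation~(\ref{eqnlem disc6}) for exactly this), and convert the symmetric vertex-pair sum to an edge sum using $l_{\pp\qq}=-1/\li$. Your write-up just makes the factor-of-two bookkeeping explicit where the paper says ``by the definition of $\lm$.''
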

\begin{proof}
By \eqnref{eqnlem disc6},
$\sum_{q, \, s  \in \vv{\ga}} l_{qs} \big( l_{qq}^+ - l_{ss}^+
\big)^2 = -2\sum_{q, \, s  \in \vv{\ga}} l_{qs}l_{qq}^+ l_{ss}^+$.
This gives the first equality in the lemma. Then the second equality is obtained by
using the definition of $\lm$.
\end{proof}
In \thmref{thm disc1} below, an important summation term contributing to the tau constant, as can be seen in \propref{proptau}, is expressed in terms of the entries of $\lm$ and $\plm$. This theorem combines various technical lemmas shown above, and it will be used in the proof of \thmref{thm disc2}.
\begin{theorem}\label{thm disc1}
Let $\lm$ be the discrete Laplacian matrix of size $v \times v$ for a metrized
graph $\ga$. Let $\pp$ and $\qq$ be end points of edge $e_i \in \ee{\ga}$, and let $\ri$, $R_{a_i, p}$, $R_{b_i, p}$ and $\li$ be as defined before.
$$\sum_{e_i \in \ee{\ga}} \frac{\li \big( R_{b_i, p}-R_{a_i, p} \big)^2}{(\li + \ri)^2} =
\frac{4}{v}tr(\plm) -\frac{1}{2}\sum_{q, \, s  \in \vv{\ga}} l_{qs} \big( l_{qq}^+ -
l_{ss}^+ \big)^2.$$
\end{theorem}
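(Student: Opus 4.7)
The plan is to reduce the left-hand side to a sum of squared resistance differences, rewrite those resistances in terms of $\plm$-entries via \lemref{lem disc2}, and then identify the resulting inner sum as a Dirichlet energy that collapses through \lemref{lem disc1}.

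Let $S$ denote the left-hand side. By \eqnref{eqn2termeq1}, $S = \sum_{e_i \in \ee{\ga}}\frac{(r(\pp,p)-r(\qq,p))^2}{\li}$, and by \remref{rem independence pdif} this is independent of $p\in\vv{\ga}$. Averaging over $p$ gives $S = \frac{1}{v}\sum_{p\in\vv{\ga}}\sum_{e_i\in\ee{\ga}}\frac{(r(\pp,p)-r(\qq,p))^2}{\li}$. Invoking \lemref{lem disc2}, set $a_i := l_{\pp\pp}^+-l_{\qq\qq}^+$ and $b_{i,p} := l_{\pp p}^+ - l_{\qq p}^+$, so that $r(\pp,p)-r(\qq,p) = a_i - 2b_{i,p}$. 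Expanding the square and summing over $p$, the cross term $-4a_i\sum_p b_{i,p}$ vanishes by the double-centering of $\plm$ (\remref{rem doubcent1}), yielding $\sum_p(a_i-2b_{i,p})^2 = va_i^2 + 4\sum_p b_{i,p}^2$. Hence $vS = v\sum_{e_i}\frac{a_i^2}{\li} + 4\sum_p\sum_{e_i}\frac{b_{i,p}^2}{\li}$, and \lemref{lem disc9x} handles the first inner sum directly: $\sum_{e_i}\frac{a_i^2}{\li} = -\frac{1}{2}\sum_{q,s\in\vv{\ga}}l_{qs}(l_{qq}^+-l_{ss}^+)^2$.

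The main step is to evaluate $\sum_{e_i}\frac{b_{i,p}^2}{\li}$ for fixed $p$. Writing $f(q) := l_{qp}^+$, the Dirichlet energy satisfies $\sum_{e_i}\frac{(f(\pp)-f(\qq))^2}{\li} = -\frac{1}{2}\sum_{q,s}l_{qs}(f(q)-f(s))^2 = \sum_{q,s}l_{qs}f(q)f(s)$, where the first equality counts each edge via its two ordered pairs and the second uses double-centering of $\lm$ (\exref{ex disc0}). By \lemref{lem disc1}, $\sum_s l_{qs}l_{sp}^+ = (\lm\plm)_{qp} = \delta_{qp} - \frac{1}{v}$; combining with $\sum_q l_{qp}^+ = 0$ (\remref{rem doubcent1}) yields $\sum_{q,s}l_{qs}l_{qp}^+l_{sp}^+ = \sum_q l_{qp}^+(\delta_{qp}-\tfrac{1}{v}) = l_{pp}^+$. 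Therefore $\sum_{e_i}\frac{b_{i,p}^2}{\li}=l_{pp}^+$, and summing over $p$ gives $\sum_p\sum_{e_i}\frac{b_{i,p}^2}{\li}=\tr(\plm)$. Plugging back and dividing by $v$ produces exactly the claimed identity. The main obstacle is the Dirichlet-form identification and the collapse $\plm\lm\plm = \plm$ coming from \lemref{lem disc1}; everything else is bookkeeping with properties already established in \secref{sec tau constant} and \secref{sec discrete laplacian}.
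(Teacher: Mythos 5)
Your proof is correct, and it takes a genuinely different (and somewhat cleaner) route than the paper's. The paper fixes a single vertex $p$ and carries out the whole expansion pointwise: after applying \lemref{lem disc2} it expands $-\frac{1}{2}\sum_{q,s}l_{qs}\big(l_{qq}^+-l_{ss}^+-2(l_{pq}^+-l_{ps}^+)\big)^2$ into three pieces, handles the cross term via \lemref{lem disc9}, and then invokes the two identities $\sum_{q,s}l_{qs}l_{qq}^+l_{sp}^+=l_{pp}^+-\frac{1}{v}tr(\plm)$ and $\sum_{q,s}l_{qs}l_{qp}^+l_{sp}^+=l_{pp}^+$ so that the $p$-dependent contributions cancel to leave exactly $\frac{4}{v}tr(\plm)$. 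You instead average over $p$ first, justified by \remref{rem independence pdif}; this makes the cross term $-4a_i\sum_p b_{i,p}$ vanish outright by double-centering of $\plm$, so you never need \lemref{lem disc9} or the identity $\sum_{q,s}l_{qs}l_{qq}^+l_{sp}^+=l_{pp}^+-\frac{1}{v}tr(\plm)$; the only nontrivial collapse you use is $\sum_{q,s}l_{qs}l_{qp}^+l_{sp}^+=l_{pp}^+$ (the paper's Equation (\ref{eqnlem disc9abcd})), summed over $p$ to produce $tr(\plm)$. What the paper's pointwise computation buys is the stronger statement that the identity for each fixed $p$ already decomposes with the $p$-dependence cancelling term by term (and it exercises \lemref{lem disc9}, which was proved for this purpose); what your averaging buys is fewer auxiliary identities and a transparent explanation of where $tr(\plm)$ comes from, at the mild cost of leaning on \remref{rem independence pdif}. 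Both arguments rest on the same two pillars, \lemref{lem disc2} and \lemref{lem disc1}/\corref{cor disc1}, and your Dirichlet-energy identification $\sum_{e_i}\frac{(f(\pp)-f(\qq))^2}{\li}=\sum_{q,s}l_{qs}f(q)f(s)$ is exactly the manipulation the paper performs implicitly when it passes from edge sums to vertex-pair sums (valid because the vertex set is optimal, so every nonzero off-diagonal entry of $\lm$ corresponds to a unique edge).
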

\begin{proof}
Note that the following equality follows from \exref{ex disc0} for any $p \in \vv{\ga}$,
\begin{equation}\label{eqnlem disc9ab}
\begin{split}
& \sum_{q, \, s \in \vv{\ga}} l_{qs} \big( l_{qp}^+ \big)^2 =
\sum_{q \in \vv{\ga}} \big( l_{qp}^+ \big)^2 \sum_{ s \in \vv{\ga}}
l_{qs} = 0.
\end{split}
\end{equation}
By \corref{cor disc1}, for each $p \in \vv{\ga}$
\begin{equation}\label{eqnlem disc9abc}
\begin{split}
\sum_{q, \, s \in \vv{\ga}} l_{qs} l_{qq}^+  l_{sp}^+  = l_{pp}^+ -\frac{1}{v}tr(\plm).
\end{split}
\end{equation}
Similarly, by \corref{cor disc1} and \remref{rem doubcent1}, for any $p \in \vv{\ga}$ we have
\begin{equation}\label{eqnlem disc9abcd}
\begin{split}
\sum_{q, \, s \in \vv{\ga}} l_{qs} l_{qp}^+  l_{sp}^+  = l_{pp}^+.
\end{split}
\end{equation}
Then for each $p \in \vv{\ga}$,
\begin{equation*}
\begin{split}
& \sum_{e_i \in \ee{\ga}} \frac{\li \big( R_{b_i, p}-R_{a_i, p}
\big)^2}{(\li + \ri)^2}= \sum_{e_i \in
\ee{\ga}}\frac{1}{\li} \big( r(\pp, p)-r(\qq, p) \big)^2, \quad \text{by \eqnref{eqn2termeq1}} \\
& = -\sum_{e_i \in \ee{\ga}} l_{\pp \qq} \big( -2l_{p \pp}^+ +
l_{\pp \pp}^+ + 2l_{p \qq}^+ - l_{\qq \qq}^+ \big)^2, \quad \text{by
\lemref{lem disc2}}
\\ & = -\frac{1}{2}\sum_{q, \, s  \in \vv{\ga}} l_{qs} \big( -2l_{p
q}^+ + l_{qq}^+ + 2l_{p s}^+ - l_{ss}^+ \big)^2\\
& = -\frac{1}{2}\sum_{q, \, s  \in \vv{\ga}} l_{qs} \big( l_{qq}^+ -
l_{ss}^+ \big)^2 +2\sum_{q, \, s  \in \vv{\ga}} l_{qs} \big(
l_{qq}^+ - l_{ss}^+ \big)\big( l_{p q}^+ - l_{p s}^+ \big)
-2\sum_{q, \, s \in \vv{\ga}} l_{qs}\big( l_{p q}^+ - l_{p s}^+
\big)^2
\\
& = -\frac{1}{2}\sum_{q, \, s  \in \vv{\ga}} l_{qs} \big( l_{qq}^+ -
l_{ss}^+ \big)^2 -4\sum_{q, \, s \in \vv{\ga}} l_{qs} l_{qq}^+
l_{sp}^+ -2\sum_{q, \, s \in \vv{\ga}} l_{qs}\big( l_{p q}^+ - l_{p
s}^+ \big)^2, \, \, \ \text{by \lemref{lem disc9}.}
\\
& = -\frac{1}{2}\sum_{q, \, s  \in \vv{\ga}} l_{qs} \big( l_{qq}^+ -
l_{ss}^+ \big)^2 -4\sum_{q, \, s \in \vv{\ga}} l_{qs} l_{qq}^+
l_{sp}^+ +4\sum_{q, \, s \in \vv{\ga}} l_{qs}l_{p q}^+ l_{p s}^+,
\quad \text{by \eqnref{eqnlem disc9ab}.}\\
& = -\frac{1}{2}\sum_{q, \, s  \in \vv{\ga}} l_{qs} \big( l_{qq}^+ -
l_{ss}^+ \big)^2 -4(l_{pp}^+ -\frac{1}{v}tr(\plm))+4(l_{pp}^+),
\quad \text{by Equations (\ref{eqnlem disc9abc}) and (\ref{eqnlem disc9abcd}).}
\end{split}
\end{equation*}
This gives the result.
\end{proof}
In the following lemma, another important summation term contributing to the tau constant (see \propref{proptau}) is expressed in terms of $\lm$ and $\plm$:
\begin{lemma}\label{lem discr1}
Let $\lm$ be the discrete Laplacian matrix of a
metrized graph $\ga$. Suppose $\pp$ and $\qq$ are end points of edge $e_i$. Then
$$\sum_{e_i \in \ee{\ga}} \frac{\li^3}{(\li + \ri)^2} =
\sum_{e_i \in \ee{\ga}}\frac{1}{\li}\big(\li-l_{\pp \pp}^+ + 2l_{\pp
\qq}^+ -l_{\qq \qq}^+ \big)^2.
$$
\end{lemma}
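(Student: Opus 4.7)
The plan is to show the identity edge by edge, which immediately gives the summed version. First I would use the standard parallel-reduction formula for the effective resistance between the endpoints of an edge $e_i$: since the edge of length $\li$ sits in parallel with the rest of the graph, which has resistance $\ri$ between $\pp$ and $\qq$, we get
$$r(\pp,\qq)=\frac{\li\,\ri}{\li+\ri}.$$
Therefore
$$\li-r(\pp,\qq)=\li-\frac{\li\,\ri}{\li+\ri}=\frac{\li^{2}}{\li+\ri},$$
and squaring and dividing by $\li$ yields
$$\frac{1}{\li}\bigl(\li-r(\pp,\qq)\bigr)^{2}=\frac{\li^{3}}{(\li+\ri)^{2}}.$$

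Next I would invoke \lemref{lem disc2}, applied to the pseudo inverse $\plm$, which gives
$$r(\pp,\qq)=l_{\pp\pp}^{+}-2l_{\pp\qq}^{+}+l_{\qq\qq}^{+}.$$
Substituting into the previous display,
$$\frac{\li^{3}}{(\li+\ri)^{2}}=\frac{1}{\li}\bigl(\li-l_{\pp\pp}^{+}+2l_{\pp\qq}^{+}-l_{\qq\qq}^{+}\bigr)^{2}.$$
Summing this equality over all $e_i\in\ee{\ga}$ gives exactly the claimed identity.

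There is no serious obstacle here: the lemma is really an edge-by-edge algebraic rewriting, with the only nontrivial inputs being the parallel-reduction expression for $r(\pp,\qq)$ on $\ga$ (justified by the circuit-reduction discussion in \secref{sec tau constant}) and the pseudo-inverse formula for the resistance from \lemref{lem disc2}. The only minor thing to be careful about is the case when $\ga-e_i$ is disconnected, i.e.\ $\ri=\infty$; then $r(\pp,\qq)=\li$, the summand on the left is the limiting value $\li$, and both sides agree with zero contribution from that edge after the subtraction $\li-r(\pp,\qq)$, so the formula remains consistent (alternatively, one can pass to the limit $\ri\to\infty$ as in \propref{proptau}).
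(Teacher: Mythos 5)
Your proposal is correct and follows essentially the same route as the paper: rewrite $\frac{\li^3}{(\li+\ri)^2}$ as $\frac{1}{\li}\big(\li-r(\pp,\qq)\big)^2$ using $r(\pp,\qq)=\frac{\li\ri}{\li+\ri}$, then substitute the pseudo-inverse expression for $r(\pp,\qq)$ from \lemref{lem disc2}. Your extra remark on the $\ri=\infty$ case is a harmless (and correct) addition not present in the paper's proof.
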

\begin{proof}
Since $\frac{\li \ri}{\li+\ri}=r(\pp,\qq)$ for each $e_i \in \ee{\ga}$, we have
\begin{equation*}
\begin{split}
\sum_{e_i \in \ee{\ga}} \frac{\li^3}{(\li + \ri)^2} = \sum_{e_i \in
\ee{\ga}}\frac{1}{\li} \Big(\li-\frac{\li \ri}{\li +
\ri}\Big)^2=\sum_{e_i \in \ee{\ga}} \frac{1}{\li}
\Big(\li-r(\pp,\qq)\Big)^2.
\end{split}
\end{equation*}
Then the result follows from \lemref{lem disc2}.
\end{proof}
Our main result is the following formula for $\tg$:
\begin{theorem}\label{thm disc2}
Let $\lm$ be the discrete Laplacian matrix of size $v \times v$ for a metrized graph
$\ga$, and let $\plm$ be its pseudo inverse. Suppose $\pp$ and $\qq$ are end points of $e_i \in \ee{\ga}$. Then we have
\begin{equation*}
\begin{split}
\tg & =-\frac{1}{12}\sum_{e_i \in \ee{\ga}} l_{\pp \qq}\big(\frac{1}{ l_{\pp \qq}}+l_{\pp \pp}^+-2l_{\pp
\qq}^+ +l_{\qq \qq}^+ \big)^2 +\frac{1}{4}\sum_{q, \, s  \in
\vv{\ga}} l_{qs} l_{qq}^+  l_{ss}^+ +\frac{1}{v}tr(\plm),
\\ \tg & = -\frac{1}{12}\sum_{e_i \in \ee{\ga}} l_{\pp \qq}\big(\frac{1}{ l_{\pp \qq}}+l_{\pp \pp}^+-2l_{\pp
\qq}^+ +l_{\qq \qq}^+ \big)^2 -\frac{1}{4}\sum_{e_i \in \ee{\ga}} l_{p_i q_i} \big(l_{\pp \pp}^+ -  l_{\qq \qq}^+\big)^2 +\frac{1}{v}tr(\plm).
\end{split}
\end{equation*}
\end{theorem}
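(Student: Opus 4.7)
The plan is to start from \propref{proptau}, which expresses $12\tg$ (for any fixed vertex $p$) as a sum over edges of $\frac{\li^3 + 3\li(R_{a_i,p}-R_{b_i,p})^2}{(\li+\ri)^2}$. I would split this into the two contributions $\sum_{e_i}\frac{\li^3}{(\li+\ri)^2}$ and $3\sum_{e_i}\frac{\li(R_{a_i,p}-R_{b_i,p})^2}{(\li+\ri)^2}$ and handle each separately with the matrix machinery already built up.

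For the first sum, \lemref{lem discr1} gives the identity
$\sum_{e_i}\frac{\li^3}{(\li+\ri)^2} = \sum_{e_i}\frac{1}{\li}\bigl(\li - l_{\pp\pp}^+ + 2l_{\pp\qq}^+ - l_{\qq\qq}^+\bigr)^2$. The key observation is that when $\pp,\qq$ are the endpoints of $e_i$ (in the optimal vertex set) we have $l_{\pp\qq} = -1/\li$, so $\frac{1}{\li} = -l_{\pp\qq}$ and $\li = -\frac{1}{l_{\pp\qq}}$. Substituting these into the expression on the right and pulling the sign into the square produces exactly $-l_{\pp\qq}\bigl(\frac{1}{l_{\pp\qq}} + l_{\pp\pp}^+ - 2l_{\pp\qq}^+ + l_{\qq\qq}^+\bigr)^2$, which (after the overall factor of $\frac{1}{12}$) is the first summand in the theorem.

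For the second sum, I would invoke \thmref{thm disc1}, which states that
$\sum_{e_i}\frac{\li(R_{b_i,p}-R_{a_i,p})^2}{(\li+\ri)^2} = \frac{4}{v}\tr(\plm) - \frac{1}{2}\sum_{q,s}l_{qs}\bigl(l_{qq}^+ - l_{ss}^+\bigr)^2$. By \lemref{lem disc9x}, the second term on the right equals $\sum_{q,s}l_{qs}l_{qq}^+l_{ss}^+$. Multiplying by $\tfrac{1}{4}$ (recall the factor of $3$ combined with $\tfrac{1}{12}$) yields $\frac{1}{v}\tr(\plm) + \frac{1}{4}\sum_{q,s}l_{qs}l_{qq}^+l_{ss}^+$, and adding this to the first contribution produces the first claimed formula.

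The second formula in the theorem then comes for free: the other equality in \lemref{lem disc9x} reads $\sum_{q,s}l_{qs}l_{qq}^+l_{ss}^+ = \sum_{e_i}\frac{1}{\li}(l_{\pp\pp}^+ - l_{\qq\qq}^+)^2 = -\sum_{e_i}l_{\pp\qq}(l_{\pp\pp}^+ - l_{\qq\qq}^+)^2$, which replaces the vertex-indexed sum by the edge-indexed one. Since all the genuinely technical work has already been absorbed into \thmref{thm disc1} (which uses \lemref{lem disc2}, \lemref{lem disc9}, and \corref{cor disc1} to cancel the $p$-dependence) and into \lemref{lem discr1}, the remaining proof is essentially assembly. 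The only step requiring care is bookkeeping the sign convention $l_{\pp\qq} = -1/\li$ when converting between the ``metric'' form (with $\li$) and the ``matrix'' form (with $l_{\pp\qq}$); no new obstacle arises.
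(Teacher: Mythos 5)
Your proposal is correct and follows exactly the paper's own argument: split the formula of \propref{proptau} into the two edge sums, apply \lemref{lem discr1} to the first and \thmref{thm disc1} to the second, and use \lemref{lem disc9x} to pass between the two forms of the quadratic term. The sign bookkeeping $l_{\pp\qq}=-1/\li$ is handled correctly, so nothing is missing.
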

\begin{proof}
By \propref{proptau}, for any $p \in \vv{\ga}$
\begin{equation*}
\begin{split}
\tg=\frac{1}{12}\sum_{e_i \in \ee{\ga}}\frac{\li^3}{(\li +
\ri)^2}+\frac{1}{4}\sum_{e_i \in \ee{\ga}} \frac{\li \big( R_{b_i,
p}-R_{a_i, p} \big)^2}{(\li + \ri)^2}.
\end{split}
\end{equation*}
Thus the first equality in the theorem follows from \lemref{lem discr1}, \thmref{thm disc1}
and \lemref{lem disc9x}. Then the second equality follows from \lemref{lem disc9x}.
\end{proof}
\begin{corollary}\label{cor discr lower bound}
Let $\lm$ be the discrete Laplacian matrix of size $v \times v$ for a graph
$\ga$. Then we have
$\tg \geq \frac{1}{v}tr(\plm).$
\end{corollary}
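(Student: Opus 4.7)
The plan is to read off the corollary directly from Theorem \ref{thm disc2}, exploiting the sign of off-diagonal entries of the discrete Laplacian. I would use the second form of the identity, namely
\[
\tg = -\tfrac{1}{12}\sum_{e_i \in \ee{\ga}} l_{\pp \qq}\Bigl(\tfrac{1}{ l_{\pp \qq}}+l_{\pp \pp}^+-2l_{\pp \qq}^+ +l_{\qq \qq}^+ \Bigr)^{2} -\tfrac{1}{4}\sum_{e_i \in \ee{\ga}} l_{\pp \qq} \bigl(l_{\pp \pp}^+ -  l_{\qq \qq}^+\bigr)^{2} +\tfrac{1}{v}\operatorname{tr}(\plm),
\]
so that the obstruction reduces entirely to showing that the two sums in front of $\tfrac{1}{v}\operatorname{tr}(\plm)$ are nonnegative.

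The key observation is the sign of $l_{\pp \qq}$ when $\pp$ and $\qq$ are the endpoints of an edge $e_i$. By the definition of the discrete Laplacian recalled in \secref{sec discrete laplacian}, if $\pp$ and $\qq$ are connected by an edge of length $\li$ in an optimal vertex set, then $l_{\pp \qq}=-1/\li<0$. Hence $-l_{\pp \qq}>0$ for every $e_i \in \ee{\ga}$. Each summand in both edge sums is therefore a positive number times a square, and so each of the two sums
\[
-\tfrac{1}{12}\sum_{e_i \in \ee{\ga}} l_{\pp \qq}\Bigl(\tfrac{1}{ l_{\pp \qq}}+l_{\pp \pp}^+-2l_{\pp \qq}^+ +l_{\qq \qq}^+ \Bigr)^{2}, \qquad -\tfrac{1}{4}\sum_{e_i \in \ee{\ga}} l_{\pp \qq} \bigl(l_{\pp \pp}^+ -  l_{\qq \qq}^+\bigr)^{2}
\]
is $\geq 0$. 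Dropping these nonnegative contributions from the right-hand side of the formula yields the inequality $\tg \geq \tfrac{1}{v}\operatorname{tr}(\plm)$.

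There is essentially no obstacle beyond invoking the theorem and noting the sign convention; as a sanity check one can alternatively use the first form of Theorem \ref{thm disc2}, in which the middle term is $\tfrac{1}{4}\sum_{q,s\in\vv{\ga}} l_{qs}l_{qq}^+ l_{ss}^+$, and then appeal to Lemma \ref{lem disc9x} to rewrite it as $\tfrac{1}{4}\sum_{e_i \in \ee{\ga}} \tfrac{1}{\li}(l_{\pp\pp}^+ - l_{\qq\qq}^+)^2 \geq 0$; combined with the same sign argument on the first sum this again gives $\tg \geq \tfrac{1}{v}\operatorname{tr}(\plm)$. Either route produces a one-line proof of the corollary.
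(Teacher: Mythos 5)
Your proof is correct and matches the paper's (implicit) argument: the corollary is stated without proof precisely because it follows from Theorem \ref{thm disc2} by observing that $l_{\pp\qq}=-1/\li<0$ for adjacent vertices, so both edge sums are nonnegative and may be dropped. Your alternative route through Lemma \ref{lem disc9x} is equally valid and is just the first form of the same identity.
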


Using \thmref{thm disc2 copy} and \eqnref{eqn disc5}, we can compute $\tg$ and $\mu_{can}$ by a computer program whose computational complexity and memory consumption when $\tg$ is computed is at the level of a matrix inversion.

Next, we will express $\mu_{can}$ in terms of the discrete Laplacian matrix and its pseudo inverse.
\begin{proposition}\label{prop mucan}
For a given metrized graph $\ga$, let $\lm$ be its discrete Laplacian, and let $\plm$ be the corresponding pseudo inverse.
Suppose $\pp$ and $\qq$ denotes the end points of $e_i \in \ee{\ga}$. Then we have
$$\mu_\can(x) = \sum_{p \in \vv{\ga}} (1 - \frac{1}{2}\vb(p))
\, \delta_p(x) - \sum_{e_i \in \ee{\ga}} \big(l_{\pp \qq}+l_{\pp \qq}^2 (l_{\pp \pp}^+ -2 l_{\pp \qq}^+ +l_{\qq \qq}^+)\big)dx.$$
\end{proposition}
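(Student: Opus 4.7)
The plan is to start from the formula for $\mu_\can$ given by \thmref{thmCanonicalMeasureFormula}, which already matches the $\delta_p$-part of the claim, and rewrite the coefficient $\frac{1}{\li+\ri}$ of $dx$ purely in terms of the entries $l_{\pp\qq}$ and $l_{pq}^+$. The identity needed is
\[
\frac{1}{\li+\ri} \;=\; -l_{\pp \qq}-l_{\pp \qq}^{2}\bigl(l_{\pp \pp}^+ -2 l_{\pp \qq}^+ +l_{\qq \qq}^+\bigr),
\]
for every edge $e_i$ with endpoints $\pp,\qq$, after which the proposition follows edge-by-edge.

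To establish this identity, I would exploit two ingredients already in the paper. First, since we work with an optimal vertex set, the vertices $\pp$ and $\qq$ are joined by the single edge $e_i$ of length $\li$, so by the definition of the discrete Laplacian, $l_{\pp\qq}=-\tfrac{1}{\li}$ and hence $l_{\pp\qq}^{2}=\tfrac{1}{\li^{2}}$. Second, by \lemref{lem disc2},
\[
l_{\pp \pp}^+ -2 l_{\pp \qq}^+ +l_{\qq \qq}^+ \;=\; r(\pp,\qq)\;=\;\frac{\li\,\ri}{\li+\ri},
\]
where the second equality comes from the parallel reduction of the edge $e_i$ (of resistance $\li$) against the rest of the graph (of resistance $\ri$).

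Substituting these two facts into the right-hand side of the desired identity yields
\[
-l_{\pp\qq}-l_{\pp\qq}^{2}\,r(\pp,\qq)\;=\;\frac{1}{\li}-\frac{1}{\li^{2}}\cdot\frac{\li\,\ri}{\li+\ri}\;=\;\frac{(\li+\ri)-\ri}{\li(\li+\ri)}\;=\;\frac{1}{\li+\ri},
\]
which is precisely the coefficient of $dx$ on $e_i$ appearing in \thmref{thmCanonicalMeasureFormula}. Summing over all $e_i\in\ee{\ga}$ and combining with the unchanged vertex part $\sum_{p}(1-\tfrac{1}{2}\vb(p))\delta_p(x)$ gives the stated formula for $\mu_\can$.

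There is no real obstacle here; the argument is essentially a one-line rewriting. The only point requiring care is the implicit use of the optimal vertex set hypothesis from \secref{sec discrete laplacian}, which ensures $l_{\pp\qq}=-1/\li$ (so that no self-loop or parallel edge between $\pp$ and $\qq$ corrupts the entry of $\lm$). One might also observe, as a sanity check via \eqnref{eqn genus}, that the total mass of $\mu_\can$ is $v - e + (e - v + 1) = 1$, confirming consistency with the known total mass of the canonical measure.
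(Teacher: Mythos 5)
Your proposal is correct and follows exactly the paper's own (very terse) proof: both start from \thmref{thmCanonicalMeasureFormula} and rewrite the coefficient $\frac{1}{\li+\ri}$ using \lemref{lem disc2} together with $r(\pp,\qq)=\frac{\li\ri}{\li+\ri}$ and $l_{\pp\qq}=-1/\li$. The only difference is that you spell out the one-line algebraic verification and the total-mass sanity check, which the paper leaves implicit.
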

\begin{proof}
The result follows from \thmref{thmCanonicalMeasureFormula}, \lemref{lem disc2}, and the fact that $r(\pp,\qq)=\frac{\li \ri}{\li+\ri}$
for each $e_i \in \ee{\ga}$.
\end{proof}
In the rest of this section, we will compute
the tau constant and the canonical measure for some metrized graphs.
\begin{example}
Let $\ga$ be a complete graph on $5$ vertices with each edge length is equal to $\frac{1}{10}$, so that $\elg=1$.
Then $\ga$ has the following discrete Laplacian matrix and pseudo inverse:
\[
\lm=\left[
\begin{array}{ccccc}
 40 & -10 & -10 & -10 & -10 \\
 -10 & 40 & -10 & -10 & -10 \\
 -10 & -10 & 40 & -10 & -10 \\
 -10 & -10 & -10 & 40 & -10 \\
 -10 & -10 & -10 & -10 & 40
\end{array}
\right]
\text{\, and \, \,}
\plm=\left[
\begin{array}{ccccc}
 \frac{2}{125} & -\frac{1}{250} & -\frac{1}{250} & -\frac{1}{250} & -\frac{1}{250} \medskip \\
 -\frac{1}{250} & \frac{2}{125} & -\frac{1}{250} & -\frac{1}{250} & -\frac{1}{250} \medskip \\
 -\frac{1}{250} & -\frac{1}{250} & \frac{2}{125} & -\frac{1}{250} & -\frac{1}{250} \medskip \\
 -\frac{1}{250} & -\frac{1}{250} & -\frac{1}{250} & \frac{2}{125} & -\frac{1}{250} \medskip \\
 -\frac{1}{250} & -\frac{1}{250} & -\frac{1}{250} & -\frac{1}{250} & \frac{2}{125}
\end{array}
\right].
\]
Thus, we obtain $\tg=\frac{23}{500}$ by applying \thmref{thm disc2}.
Moreover, \propref{prop mucan} can be used to compute the canonical measure of $\ga$.
Namely, $$\mu_{can}(x)=-\sum_{p \in \vv{\ga}}\delta_{p}(x)+6\sum_{e_i \in \ee{\ga}}dx.$$
%
\end{example}
\begin{figure}
\centering
\includegraphics[scale=0.8]{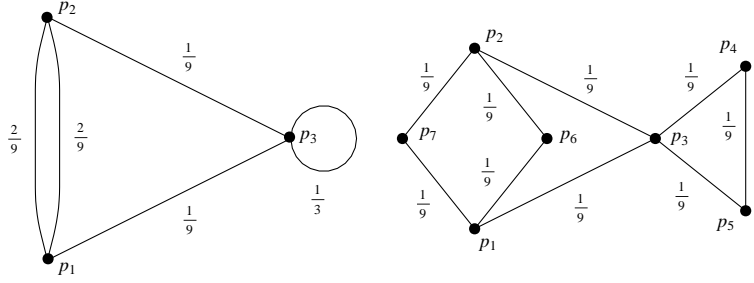} \caption{$\ga$ with $\vv{\ga}=\{ 1, \, 2, \, 3\}$ and with an optimal vertex set $\{ 1, \, 2, \, 3, \, 4, \, 5, \, 6, \, 7\}$.} \label{fig g1andg2}
\end{figure}

\begin{example}
Let $\ga$ be a metrized graph illustrated as the first graph in \figref{fig g1andg2}, where the edge lengths are also shown. Note that $\elg=1$.
Since $\ga$ with this set of vertices $\vv{\ga}=\{ 1, \, 2, \, 3\}$ has a self loop and two multiple edges, we need to work with an optimal
vertex set to have the associated discrete Laplacian matrix. This is done by considering additional $2$ points on the self loop as new vertices, and taking
$1$ more points on each multiple edges as new vertices. The new metrized graph is illustrated by the second graph in \figref{fig g1andg2}. As we know by \remref{remvalence} that the new length distribution for the self loop and the multiple edges
will not change $\tg$. Now, $\ga$ has the following discrete Laplacian matrix and the pseudo inverse:
\[
\lm=\left[
\begin{array}{ccccccc}
 27 & 0 & -9 & 0 & 0 & -9 & -9 \\
 0 & 27 & -9 & 0 & 0 & -9 & -9 \\
 -9 & -9 & 36 & -9 & -9 & 0 & 0 \\
 0 & 0 & -9 & 18 & -9 & 0 & 0 \\
 0 & 0 & -9 & -9 & 18 & 0 & 0 \\
 -9 & -9 & 0 & 0 & 0 & 18 & 0 \\
 -9 & -9 & 0 & 0 & 0 & 0 & 18
\end{array}
\right]
\]
and
\[
\plm=\left[
\begin{array}{ccccccc}
 \frac{47}{1323} & -\frac{2}{1323} & -\frac{1}{147} & -\frac{10}{441} & -\frac{10}{441} & \frac{4}{441} & \frac{4}{441} \medskip \\
 -\frac{2}{1323} & \frac{47}{1323} & -\frac{1}{147} & -\frac{10}{441} & -\frac{10}{441} & \frac{4}{441} & \frac{4}{441} \medskip \\
 -\frac{1}{147} & -\frac{1}{147} & \frac{11}{441} & \frac{4}{441} & \frac{4}{441} & -\frac{13}{882} & -\frac{13}{882} \medskip \\
 -\frac{10}{441} & -\frac{10}{441} & \frac{4}{441} & \frac{89}{1323} & \frac{40}{1323} & -\frac{3}{98} & -\frac{3}{98} \medskip \\
 -\frac{10}{441} & -\frac{10}{441} & \frac{4}{441} & \frac{40}{1323} & \frac{89}{1323} & -\frac{3}{98} & -\frac{3}{98} \medskip \\
 \frac{4}{441} & \frac{4}{441} & -\frac{13}{882} & -\frac{3}{98} & -\frac{3}{98} & \frac{25}{441} & \frac{1}{882} \medskip \\
 \frac{4}{441} & \frac{4}{441} & -\frac{13}{882} & -\frac{3}{98} & -\frac{3}{98} & \frac{1}{882} & \frac{25}{441}
\end{array}
\right].
\]
Finally, applying \thmref{thm disc2} gives $\tg=\frac{23}{324}$.
By \propref{prop mucan}, we have the following canonical measure for $\ga$:
$$\mu_{can}(x)=-\frac{1}{2}\delta_{p_1}(x)-\frac{1}{2}\delta_{p_2}(x)-\delta_{p_3}(x)+3\sum_{e_i \in \ee{\ga}}dx.$$
%
\end{example}


\end{document}